\documentclass[a4paper]{article}

\usepackage{graphicx}

\usepackage{amssymb}
\usepackage{amsmath}
\usepackage{amsthm}
\usepackage[linesnumbered,ruled,vlined]{algorithm2e}
\usepackage{tikz}
\usetikzlibrary{decorations.pathreplacing}

\usepackage{hyperref}

\newtheoremstyle{it_dotless} 
                        {0.5em}   
                        {0.5em}   
                        {\itshape}  
                        {}          
                        {\bfseries} 
                        {:}         
                        {\newline}  
                        {}			

\newtheoremstyle{dotless} 
                        {0.5em}   
                        {0.5em}   
                        {}  		
                        {}          
                        {\bfseries} 
                        {:}         
                        {\newline}  
                        {}			
                        
\theoremstyle{it_dotless}
\newtheorem{theorem}{Theorem}
\newtheorem{lemma}[theorem]{Lemma}
\newtheorem{corollary}[theorem]{Corollary}

\theoremstyle{dotless}
\newtheorem{remark}[theorem]{Remark}
\newtheorem{example}[theorem]{Example}

\newtheorem{definition}[theorem]{Definition}

\newcommand{\bitem}{\begin{itemize}}
\newcommand{\eitem}{\end{itemize}}
\newcommand{\mc}[1]{\mathcal{#1}}

\newcommand{\N}{\mathbb{N}}
\newcommand{\R}{\mathbb{R}}

\newcommand{\M}{\mathcal{M}}

\newcommand{\bpm}{\begin{pmatrix}}
\newcommand{\epm}{\end{pmatrix}}
\newcommand{\bsm}{\left(\begin{smallmatrix}}
\newcommand{\esm}{\end{smallmatrix}\right)}
\newcommand{\T}{\top}

\newcommand{\la}{\langle}
\newcommand{\ra}{\rangle}

\newcommand{\dotbigcup}{\mathop{\dot{\bigcup}}}

\DeclareMathOperator{\intr}{int}

\DeclareMathOperator{\bd}{bd}

\DeclareMathOperator*{\argmax}{arg max}
\DeclareMathOperator{\supp}{supp}

\DeclareMathOperator{\conv}{conv}


\DeclareMathOperator*{\var2}{\mathcal{V}_{\mathbb{R}}}

\DeclareMathOperator*{\td}{td}

\DeclareMathOperator*{\FPC}{FPC}
\DeclareMathOperator{\rnd}{rnd}

\begin{document}

\title{Symmetry-free SDP Relaxations for \\ Affine Subspace Clustering
\thanks{Authors gratefully acknowledge support by the German Research Foundation (DFG), grant GRK 1653.
}
}

\author{Francesco Silvestri\footnote{Institut f\"{u}r Informatik, Heidelberg University, INF205, 69120 Heidelberg, Germany}$^{\; ,}$\footnote{IWR, Heidelberg University, INF205, 69120 Heidelberg, Germany} \and Gerhard~Reinelt \footnotemark[2] \and Christoph Schn\"{o}rr \footnote{Institut f\"{u}r Angewandte Mathematik, Heidelberg University, INF205, 69120 Heidelberg, Germany}}

\maketitle

\begin{abstract}
We consider clustering problems where the goal is to determine an optimal partition of a given point set in Euclidean space in terms of a collection of affine subspaces. 
While there is vast literature on heuristics for this kind of problem, such approaches are known to be susceptible to poor initializations and getting trapped in bad local optima. 
We alleviate these issues by introducing a semidefinite relaxation based on Lasserre's method of moments. While a similiar approach is known for classical Euclidean clustering problems, a generalization to our more general subspace scenario is not straighforward, due to the high symmetry of the objective function that weakens any convex relaxation. We therefore introduce a new mechanism for symmetry breaking based on covering the feasible region with polytopes. Additionally, we introduce and analyze a deterministic rounding heuristic.

\end{abstract}

\section{Introduction}

\subsection{Background}

Given points $b_{i} \in \R^{d},\, i \in [n]:=\{1,2,\dotsc,n\}$ and $k\in \N$, the classical Euclidean clustering problem asks to jointly minimize the objective 
\begin{equation} \label{eq:intro-clustering-1}
\min_{u, x}  \sum_{i\in [n]} \sum_{j \in [k]} u_{ij} \| x_j - b_i \|^2
\end{equation}
with respect to \emph{centroids} $x_{j}\in \R^{d},\, j \in [k]$, and \emph{assignment variables} $u_{ij} \in \{0,1\}$ such that each row of the \emph{assignment matrix} $U=(u_{ij})_{i,j}\in \{0,1\}^{n\times k}$ only contains a single one. 

While optimizing over \emph{both} sets of variables \emph{jointly} is known to be NP-hard due to its nonlinear, combinatorial structure, fixing one set of variables immediately leads to trivial subproblems. For this reason, many popular heuristics ($k$-means, mean-shift, etc. -- cf.~\cite{Xu2005}) focus on alternatingly fixing one set of variables while optimizing the remaining ones. Although these heuristics are generally easy to implement and fast, they strongly depend on proper initializations and may easily get stuck in local optima without any approximation guarantees. 

This shortcoming can be avoided by turning to combinatorial optimization techniques (e.g.~\cite{Merle-et-al-00}), but they do not scale up to large data sets. 

Alternatively, a semidefinite convex relaxation \cite{KMeans-SDP-07} has been suggested. This approach is remarkable in that it avoids the inherent problems with symmetry of \eqref{eq:intro-clustering-1} for convex relaxations:\newline
Given any solution $\{ x_{j} \}_{j\in[k]}$, $U=[U_1,\ldots,U_k]$ of \eqref{eq:intro-clustering-1}, as well as a permutation $\pi$ of $[k]$, we get another solution $\{ x_{\pi(j)} \}_{j\in[k]}$, $[U_{\pi(1)},\ldots,U_{\pi(k)}]$ with equal objective value. For this reason, convex relaxations tend to average 
over optimal solution 
through convex combinations, making it nearly impossible to recover information about the individual clusters.

The approach of \cite{KMeans-SDP-07} avoids this issue by reducing the problem to a linear program over projection matrices constructed from $U$, which can be effectively approximated by SDPs. This reduction however essentially depends on the \textit{specific simple closed form solution} of optimal centroids $x_j$, when the assignment variables $U$ are fixed.

In this paper, we focus on convex relaxations of the significantly more involved problem
\begin{equation} \label{eq:intro-clustering-2}
\min_{u, x}  \sum_{i\in [n]} \sum_{j \in [k]} u_{ij} \| A_{i}x_j-b_i\|^2_2
\end{equation}
with \emph{given} data $(A_{i},b_i)\in \R^{l\times d}\times \R^l,\, i \in [n]$, \emph{unknown} parameters $x_{j} \in \R^{d},\, j \in [k]$, and an \emph{unknown} assignment matrix $U=(u_{ij})_{i,j} \in \{0,1\}^{n\times k}$. In comparison with \eqref{eq:intro-clustering-1}, this approach extends the representation of data by \emph{points} to \emph{affine subspaces}, which is significant for many applications.

Not surprisingly, the approach of \cite{KMeans-SDP-07} cannot be adapted to this advanced setting. While it is possible, of course, to give the closed form of an optimal $x_j$ for fixed $U$, namely
\begin{equation}
x_j(U) = \Big(\sum_{i\in [n]} u_{ij}A_i^{\T}A_i\Big)^\dagger \Big(\sum_{i\in [n]} u_{ij}A_i^{\T} b_i\Big),\label{closedFormula}
\end{equation}
this closed form involves pseudo inverses $(\ldots)^\dagger$ of linear functions of $U$ and hence does not yield any exploitable structures for the reduced problem. In fact, due to the nonlinearity of $(\ldots)^\dagger$, it is not even clear how to express $x_j(U)$ as a rational function in $U$ without explicitly computing each $x_j(U)$ for every possible choice of $U$ beforehand.

Unions of subspaces as signal models have been advocated and studied in the research field of compressive sensing during the last years \cite{Carin2011}. In order to prove recovery guarantees by convex programming, sparsity assumptions about the representation are essential. Regarding the subspace clustering problem, such assumptions require the subspace dimensions to be low relative to the dimension of the embedding space. In this paper, we do not rely on any such assumption. For example, even the simplest case of clustering one-dimensional subspaces in $\R^2$ violates the ``independent subspaces'' assumption of \cite[Section 4]{Elhamifar2012}.

\subsection{Contribution}

Our main contribution is a hierarchy of convex relaxations for problem \eqref{eq:intro-clustering-2} based on Lasserre's method of moments that avoids the degeneracy of solutions induced by symmetry. 

Our approach is based on the assumption that we can cover the feasible region with polytopes in such a way that each optimal center $\{x_j\}_{j\in[k]}$ is covered by the interior of exactly one polytope. 
Under this assumption, we are able to reduce \eqref{eq:intro-clustering-2} to a highly structured optimization problem over a constrained simplex. Using this new structure, symmetric solutions can be relaxed away, and Lassere's method of moments can be used to give a hierarchy of convex relaxations. 


\subsection{Organization of the Paper}

We summarize Lasserre's method of moments in section \ref{sec:notation} and introduce the notation.
In section \ref{sec:k-clustering}, we reduce \eqref{eq:intro-clustering-2} to a highly structured optimization problem~$(R1)$ over constrained simplices in order to derive the symmetry-free formulation~$(R2)$. 

Section \ref{sec:relaxation} is mostly devoted to the application of Lasserre's method of moments to problem $(R2)$, which yields the hierarchy $(R2)[t]$. After pointing out ways to simplify $(R2)[1]$, we also suggest a relaxed hierarchy $(R3)[t]$ that can be computed much faster.

To complete the algorithmic procedure, we also give a deterministic rounding heuristic in section \ref{sec:rounding}. 

Finally, based on our novel approach, we sketch in section \ref{sec:extention} several ways to extend \eqref{eq:intro-clustering-2} to more general settings. Some experiments are reported in section \ref{sec:experiments} to illustrate the mechanism for symmetry breaking, that is essential for effective SDP relaxation.


\section{Preliminaries}\label{sec:notation}

This section gives a basic description of  \textbf{Lasserre's Method of Moments} and is based mostly on the book \cite{lasserre15}, with some minor changes of notation.

\subsection{Linear Algebra}

In the following, we list some cones with their corresponding partial order as
\begin{itemize}
\item $(\R^n_+,\leq)$ vectors in $\R^n$ with nonnegative entries,
\item $(\mc{S}^n_+,\preceq)$ symmetric positive semidefinite $n\times n$ matrices,
\item $(\mc{D}^n,\preccurlyeq)$ double nonnegative matrices given as $\mc{D}^n=\mc{S}_+^n\cap \R^{n\times n}_+  $.
\end{itemize}

Special matrices include the $n\times n$ identity $I_n$ and the $n\times n$ all ones matrix $J_n=ee^\T$ where $e$ denotes the vector of all ones of appropriate dimension.

\subsection{Polynomials}

Given a vector $\alpha\in \N^d$ and $x\in\R^d$ define the monomial $x^\alpha = \prod_{i\in [d]}x_i^{\alpha_i}$ and its total degree as $\deg(x^\alpha)=\langle e,\alpha\rangle$. Let $\R[x]$ denote the set of multivariate polynomials in $x$ where we set $\deg(p):=\max\{\deg(x^\alpha) \colon p_\alpha\neq 0\}$ for any element $p\in \R[x]$.

Furthermore, the vector space of polynomials of degree at most $t$ is given as 
\begin{equation}
\R_t[x]=\{p\in \R[x]\colon \deg(p)\leq t\}
\end{equation}
where 
\begin{equation}
z(t):=\dim(\R_t[x])=\binom{d+t}{d}.
\end{equation}
By defining
\begin{equation}
\N^d_t:=\{\alpha\in \N^d \colon \langle \alpha,e\rangle \leq t \}
\end{equation}
we see that each polynomial $p\in \R_t[x]$ can be written as  $p(x)=\sum_{\alpha \in \N^d_t}p_\alpha x^\alpha$ and we may identify $\R_t[x]$ with $\R^{z(t)}$ by treating $p$ as the vector of its coefficients. In this context we will also write $p\in\R^{z(t)}$ and encode the canonical monomial base $(x^\alpha)_{\alpha\in \N^d_t}$ as $v_d(x)$ such that $p(x)=\la p, v_d(x)\ra$.


Given $y=(y_\alpha)_{\alpha\in \N^d}$, we can use this identification to define the \textbf{Riesz functional} $L_y:\R[x]\rightarrow\R$ as $p\mapsto L_y(p)=\langle p,y\rangle$. 

\subsection{Moment matrices}

For $t\in\N$ and $y\in \R^{\N^d_{2t}}$, the matrix $M_t(y)$ of size $z(t)$ is defined by
\begin{equation}
(M_t(y))_{\alpha,\beta}:=L_y(x^\alpha \cdot x^\beta)=y_{\alpha+\beta}
\end{equation} 
and is called the \textbf{moment matrix} of order $t$ of $y$.

\noindent More generally, let $f$ be a multivariate polynomial and define $\td(f):=\lceil \frac{\deg(f)}{2}\rceil$. For $t\geq \td(f)$,  the matrix $M_{t}(f,y)$ of size $z(t-\td(f))$ is defined by
\begin{equation}
(M_{t}(f,y))_{\alpha,\beta}:=L_y(x^\alpha\cdot x^\beta \cdot f)
\end{equation} 
and is called the \textbf{localizing moment matrix} of order $t$ of $y$ with respect to $f$.
Note that each entry of $M_t(f,y)$ is a linear expression in $y$ and that we recover $M_t(y)=M_t(1,y)$ as a special case.

\subsection{Measures and moments}

Let $\mc{N}(K)\subseteq \R_t[x]$ be the convex cone of polynomials that are nonnegative on~$K$ and denote the dual cone by
\begin{equation}
\mc{N}^*(K)=\left\{ y\in\R^{\N^d} \,\middle|\, L_y(f)\geq 0,\, \forall f\in \mc{N}(K)\right\}.
\end{equation}

For a set $K\subseteq \R^d$, denote by $\M_+(K)$ the space of finite (nonnegative) Borel measures supported on~$K$ and by $\mc{P}(K)$ the subset of probability measures on $K$. We can recover the cone of the corresponding moments
\begin{equation}
\left\{ y\in\R^{\N^d} \,\middle|\, \exists \mu\in\mc{M}_+(K)\colon y_\alpha = \int_K x^\alpha d\mu \quad \forall \alpha\in \N^d \right\}\subseteq \mc{N}^*(K)\label{eq:cone_of_moments}
\end{equation}
where equality holds if $K$ is compact \cite[Lemma 4.7]{lasserre15}.

\subsection{Reformulation of Optimization Problems}\label{sec:Reformulation_of_Optimization}

\noindent Let $K\subseteq \R^d$ be a compact set and $f(x)= \sum_{\alpha \in \N^d_t}f_\alpha x^\alpha$ be a real-valued multivariate polynomial, then 
\begin{equation}
\inf_{x\in K} f(x)=\inf_{\mu\in \mc{P}(K)}\int_K f d\mu \label{eq:inf_problem}
\end{equation}
can be reduced to a convex linear programming problem. Indeed, we have that 
\begin{equation}
\int_K f d\mu = \int_K \sum_{\alpha \in \N^d_t}f_\alpha x^\alpha d\mu =  \sum_{\alpha \in \N^d_t}f_\alpha \int_K x^\alpha d\mu = L_y(f)
\end{equation} where $y_\alpha = \int_K x^\alpha d\mu$ is the moment of order $\alpha$.

Consequently, if $f$ is polynomial, then
\begin{equation}
\inf L_y(f) \quad s.t.\quad  y_0=1,\; y\in \mc{N}^*(K) \label{eq:inf_convex}
\end{equation}
is a relaxation of problem \eqref{eq:inf_problem} with the benefit of being a reformulation whenever equality holds in \eqref{eq:cone_of_moments}.

Note that the constraint $y_0=1$ enforces that $y$ represents a measure in $\mc{P}(K)\subsetneq\mc{M}_+(K)$, provided $y\in \mc{N}^*(K)$.

\noindent Although problem \eqref{eq:inf_convex} is a convex linear programming problem, the characterization of $y\in \mc{N}^*(K)$ (known as $K$-moment problem in the literature) may be notoriously hard for general $K$.

However, for compact semi-algebraic $K$ given as 

\begin{equation}
K=\{x\in \R^d \colon g_i(x)\geq 0\quad  \forall i\in [k]\},
\end{equation}

\noindent for some polynomials $g_i \in \R[x]$, an explicit characterization of $\mc{N}^*(K)$ is available.
Since $K$ is assumed to be compact, we will assume without loss of generality that 
\begin{equation}
g_1(x)=R^2-\|x\|^2\geq 0,
\end{equation}
where $R$ is a sufficiently large positive constant (in fact, we would only need any function $u$ in the quadratic module generated by $\{g_i\}_{i\in [k]}$ to have a compact superlevel set $\{x\in \R^d \colon u(x)\leq 0\}$ for the following). This representation allows the application of a theorem on positivity by Putinar \cite[Theorem 2.15]{lasserre15}, which leads to
\begin{align}
 \mc{N}^*(K) &=\{y\in\R^{\N^d}\colon M_t(y)\succeq 0,\; M_t(g_i, y)\succeq 0\quad\forall i\in [k],\; \forall t\in \N\}\\
 &=: \mc{N}_\succeq^* (g_1,\ldots,g_k).\label{eq:putinar}
\end{align}
In particular, problem \eqref{eq:inf_convex} is equivalent to
\begin{equation}
\inf_{y\in \R^{\N^d}} L_y(f) \quad s.t. \quad y_0=1,\; y\in \mc{N}^*_\succeq (g_1,\ldots,g_k).
\end{equation}

To summarize, if $f$ is polynomial and $K$ a compact semi-algebraic set, then problem \eqref{eq:inf_problem} is equivalent to a convex linear programming problem with an infinite number of linear constraints on an infinite number of decision variables. 

\subsection{Semidefinite Relaxations}\label{sec:sdp_relax}

\noindent Now, for $t\geq \td(f)$, consider the finite-dimensional truncations 
\begin{equation}
\rho_t=\inf_{y\in \R^{\N^d_{2t}}} L_y(f) \quad s.t.\quad y_0=1,\, y\in \mc{N}_t^*(g_1,\ldots,g_k)\label{eq:LMM}
\end{equation}
of problem \eqref{eq:inf_convex} where
\begin{equation}
\mc{N}^*_t(g_1,\ldots,g_k) :=
\left\{y\in \R^{\N^d_{2t}} \,\middle|\,
\begin{array}{rl}
M_t(y)& \succeq 0,\\
M_{t}(g_i, y) &\succeq 0\quad \forall i\in[k]: t\geq \td(g_i)
\end{array}
\right\}.
\end{equation}

\noindent By construction, $\{\mc{N}^*_t\}_{t\in\N}$ generates a hierarchy of relaxations of Problem \eqref{eq:inf_convex}, where each $\{\mc{N}^*_t\}_{t\in\N}$, is concerned with moment and localizing matrices of fixed size $t$. The lowerbounds $\rho_t$ monotonically converge toward the optimal value of \eqref{eq:inf_problem} \cite[Theorem 6.2]{lasserre15} and finite convergence may take place, which can be efficiently checked \cite[Theorem 6.6]{lasserre15}. 

Furthermore, in the best case of finite convergence, \eqref{eq:LMM} will yield the global optimal value and a convex combination of global optimal solutions as minimizer, which can be efficiently decomposed into optimal solutions \cite[Sct. 6.1.2~]{lasserre15}.

In the noncompact case, the $\rho_t$ are still monotonically increasing lower bounds of \eqref{eq:inf_problem}, but convergence to the optimum is not guaranteed.

\begin{remark}
In the literature, this construction is known as \textbf{Lasserre's Method of Moments} (LMM) where it is assumed that $t \geq \max_{i}  \td(g_i)$ in addition to $t\geq \td(f)$ in order to start with a complete description of all the constraints used in the problem. Our slightly different definition is more flexible by enabling us to start with an incomplete set of constraints of low degree while still fitting into the overall hierarchy. 

It should be noted that using a value of $t$ that truncates most of the 'relevant' inequalities for the problem is not likely to yield a useful lower bound. 
\end{remark}

For convenience, we will also introduce a shortcut notation for polynomial equations $h(x)=0$ (imposed by having both $h(x)\geq 0$ and $-h(x)\geq 0$) by setting
\begin{equation}
\mc{N}^*_t(\{h_j\},\{g_i\}):=
\left\{y\in \R^{\N^d_{2t}} \,\middle|\,
\begin{array}{rl}
 M_t(y)&\succeq 0 \\
 M_{t}(h_i, y)& = 0\, \quad \forall j: t\geq \td(h_j)\\
 M_{t}(g_i, y) & \succeq 0\,\quad \forall i: t\geq \td(g_i)
\end{array}
\right\}.
\end{equation}


\section{Dealing with the Symmetry of the $k$-Clustering Problem}\label{sec:k-clustering}

This section starts by outlining the problem of $k$-clustering and the associated difficulties in solving it, in section \ref{sub:formulation}. 
In section \ref{sub:reformulation}, we preprocess the $k$-clustering problem by reducing it to a quadratic optimization problem over a simplex with an additional partition structure.
We then use this description in section \ref{sub:break symmetry} as a basis to relax the partition constraints in a way that removes symmetric solutions.

\subsection{Problem Formulation} \label{sub:formulation}

We study \eqref{eq:intro-clustering-2} in the form
\begin{subequations}\label{eq:ProblemFormulation}
\begin{align}
\min_{u, x}  & \sum_{i\in[n]} \sum_{j\in [k]} u_{ij} \| A_i x_j - b_i \|^{2}_2 \\
\text{s.t.} \enspace & Ue=e,\quad U\in \{0,1\}^{n\times k},
\end{align}
\end{subequations}
where $\{A_i\}\subseteq \mathbb{R}^{l\times d}$ and $\{b_i\}\subseteq \mathbb{R}^{l}$. 
Since 
\begin{equation}
\| A_i x_j - b_i \|^{2}= x_j^\T (A_i^\T A_i) x_j - 2(b_i^\T A_i) x_j + \|b_i\|^2_2
\end{equation}
and 
\begin{equation}
u_{ij}\in \{0,1\}\quad \Leftrightarrow  \quad u_{ij}(1-u_{ij})=0
\end{equation}
for all $i\in [n], j\in[k]$, we see that \eqref{eq:ProblemFormulation} asks us to optimize a polynomial over a real variety.

By assuming that all sensible solutions $\{x_j\}_{j\in[k]}$ are contained in a compact set $K$, we could apply LMM from the preceding section in order to approximate the solution of this problem.

However, due to increasing size we cannot expect to compute the level of convergence in LMM. In particular, it is hard to extract feasible solutions from lower levels of LMM, and the symmetric structure of the partition matrix $U$ makes this even harder. 

For example, consider any permutation $\pi\in \mathfrak{S}_k$ and an optimal solution $(U^*,X^*)$ to \eqref{eq:ProblemFormulation}. Then one can check that the values $(U^\pi,X^\pi)$ where $u^\pi_{ij}:=u^*_{i\pi(j)}$ and $x^\pi_j:=x^*_{\pi(j)}$ are an optimal solution for \eqref{eq:ProblemFormulation}, which corresponds to relabeling the clusters. Furthermore, $(U',X')$ given by
\begin{equation}
(U',X')= \frac{1}{k!}\sum_{\pi\in\mathfrak{S}_k} (U^\pi,X^\pi)
\end{equation}
will be a valid solution for each step of LMM. Since $u_{ij}'=\frac{1}{k}$ for all $i\in[n], j\in[k]$ and $x'_i=x'_j$ for all $i,j\in[k]$, there is no way to recover an optimal assignment. Since we need the assignment as well, we will reformulate the problem in the next section to avoid this symmetry.

\subsection{Parametrization with Constrained Simplices} \label{sub:reformulation}

Throughout this paper, we will assume that the feasible region can be covered by a finite set of polytopes, which is a reasonable assumption since the feasible region of most practical problems are bounded \cite{Xu2005}.
Section \ref{sec:extention} will comment on more elaborate ways to parametrize the feasible region using simplices. We therefore start with the following central assumption.\\

\noindent \textbf{Triangulation Assumption:}

\noindent \emph{The optimal solution $\{x_j\}_{j\in [k]}$ to \eqref{eq:ProblemFormulation} is contained in a union of simplices, e.g.
\begin{equation}
\{x_j\}_{j\in [k]}\subseteq \mc{P}=\bigcup_{s\in [q]} P_s\label{eq:triangulation}
\end{equation}
is a valid constraint for \eqref{eq:ProblemFormulation} where the $\{P_s\}_{s\in[q]}$ are $d$-dimensional simplices with disjoint interior. Furthermore, $\mc{P}$ can be constructed from $\{(A_i, b_i)\}_{i\in[n]}$.}\\
%

\noindent To exploit this, let $V_s$ be the matrix whose rows denote the vertices of $P_s$ such that $\conv(V_s)=P_s$. Let 
\begin{equation}
m:=\sum_{s\in [q]} |V_s|= q(d+1)
\end{equation}
such that for $\lambda^\T:=(\lambda_{v(1)}^\T,\ldots,\lambda_{v(q)}^\T)\in \Delta^{m}$ and $V=(V_1,\ldots,V_q)$ we have
\begin{equation}
x=V\lambda=\sum_{s\in [q]} V_s\lambda_{v(s)}.
\end{equation}

\begin{remark}\label{rem:redundantColumns}
Note that if the simplices $P_s$ have common vertices, $V$ will have multiple identical columns across different $V_s$. This is done on purpose, as the removal of redundant copies will be treated in section \ref{sec:extention}.
\end{remark}

\noindent Then we can express $\mc{P}$ as the image of $\Delta^m$ constrained by
\begin{equation}\label{eq:Parametrization}
\mc{P} =\{x\in\R^d \,|\, \exists \lambda\in \Delta^{m}\colon  x=V\lambda,\quad  \lambda_{v(r)} \lambda_{v(s)}^\T=0 \quad\forall\, r,s\in[q], r\neq s\}.
\end{equation}

\noindent The nonlinear orthogonality constraint 
\begin{equation}\label{eq:OrthogonalityConstraints}
\lambda_{v(r)}\lambda_{v(s)}^\T=0 \quad\forall\, r,s\in[q],r\neq s
\end{equation}
ensures that exactly one $\lambda_{v(s)}$ is nonzero, which implies $x=V\lambda=V_s\lambda_{v(s)}\in P_s$. Since $\lambda\geq 0$, we can see that \eqref{eq:OrthogonalityConstraints} is equivalent to the sum 
\begin{equation}
\lambda^\T \Omega \lambda=0
\end{equation}
where 
\begin{equation}
\Omega=(J_q-I_q)\otimes J_{d+1}
\in\{0,1\}^{m\times m}
\end{equation}
is given by a Kronecker product and zero on a block diagonal. This suggests to set  
\begin{equation} \label{eq:def-Delta-m-Omega}
\Delta^m_\Omega := \{ \lambda\in \Delta^m \colon \lambda^\T \Omega \lambda =0\}
\end{equation}
so that in particular, we can write $\mc{P}=V\Delta^m_\Omega$ as a shorthand for \eqref{eq:Parametrization}.

\begin{remark}
\noindent Note that unless $q=1$, the representation of $\mc{P}$ in \eqref{eq:Parametrization} is in general not unique since our assumption does not exclude the case that the intersection of boundaries $\bd(P_r)\cap \bd(P_s)$ is nonempty. However, by Caratheodory's theorem \cite[Thm.~2.29]{Rockafellar2009} we get a unique representation for all interior points $x\in \dotbigcup_{s\in [q]} \intr(P_s)$.
%
\end{remark}

%
Using the parametrization \eqref{eq:Parametrization} in \eqref{eq:ProblemFormulation} and using $1=\langle \lambda, e\rangle$ to homogenize, we can use $x_j=V\lambda^j$ to rewrite
\begin{equation}
\| A_i x - b_i \|^2_2=\la \lambda^j, W_i \lambda^j\ra,
\end{equation}
where
\begin{equation}
W_i:=V^{\T} A_i^{\T} A_i V - (e b_i^{\T} A_i V + V^\T A_i^\T b_i e^\T) + \|b_i\|^2_2 \cdot J_m
\end{equation}
for all $i\in [n]$. This leads to the reformulation

\begin{subequations}\label{eq:FirstReformulation}
\begin{align}
\text{(R1)}\quad  \min_{u, \lambda}  & \sum_{i\in[n]} \sum_{j\in [k]} u_{ij}  \la \lambda^j, W_i \lambda^j\ra	 \\
\text{s.t.} \enspace
& Ue=e,\quad\quad\quad U\in \{0,1\}^{n\times k}, \\
& \lambda^j\in\Delta^{m}_\Omega\quad \forall j\in[k].
\end{align}
\end{subequations}

\subsection{Removing Symmetry with Separating Triangluations}\label{sub:break symmetry}

The goal of this section is to eliminate the variable $U$ in (R1). To this end, recall that the purpose of $U$ is to model that each term $W_i$ is only evaluated at a single point in $\{\lambda^j\}_{j\in[k]}$. In particular, (R1) models the problem
\begin{equation}
\min_{\lambda}   \sum_{i\in[n]} \la \lambda_i, W_i \lambda_i\ra 
\quad \text{s.t.} \quad  \lambda_i \in \{\lambda^j\}_{j\in[k]}\subseteq \Delta^{m}_\Omega\quad \forall i\in[n], \label{eq:membership_constraint}
\end{equation}
where the membership constraint $\lambda_i \in \{\lambda^j\}_{j\in[k]}$ is modeled by $U$. 

While it is important to model this membership constraint in a more tractable formulation for optimization, using the $U$ variable introduces the inherent problematic symmetries mentioned in Section \ref{sub:formulation} in the first place by turning the unordered set $\{ \lambda^j\}_{j\in[k]}$ into an arbitrarily ordered list. 

As a main idea of this paper, we propose another, symmetry free formulation for this membership based on the following property.

\begin{definition}[Separating Triangulation]
Let $\mc{P}=\bigcup_{s\in [q]} P_s$ be a triangulation satisfying \eqref{eq:triangulation}. A set of points $\{x_j\}_{j\in [k]}\subseteq \mc{P}$ is called \textbf{separated} by $\mc{P}$ if 
\begin{equation}
|\{x_j\}_{j\in [k]} \cap  P_s| \leq 1 \quad \forall s\in [q].\label{eq:separation_criterion}
\end{equation}
$\mc{P}$ is called \textbf{separating} (for \eqref{eq:ProblemFormulation}) if an optimal set of centroids $\{x_j\}_{j\in [k]}$ for \eqref{eq:ProblemFormulation} is separated by $\mc{P}$.
\end{definition}

The central advantage of a separating triangulation is that the representation of the optimal $\{x_j\}_{j\in [k]}$ in (R1) becomes orthogonal. 

\begin{lemma}\label{lem:orthogonal}
Let $\mc{P}=\bigcup_{s\in [q]} P_s$ be a triangulation satisfying \eqref{eq:triangulation}. For a set of points $\{x_j\}_{j\in [k]}\subseteq \mc{P}$, let $x_j=V\lambda^j$ with $\lambda^j\in \Delta_\Omega^m$ for all $j\in [k]$ be their representation in (R1). Then $\{x_j\}_{j\in [k]}$ is separated if and only if whenever $j, j'\in [k]$ and  $j\neq j'$, 
\begin{equation}
\lambda^j_{v(s)} \left( \lambda^{j'}_{v(s)}\right)^\T =0 \quad \forall s\in [q].\label{eq:orthogonal_representation}
\end{equation}
In particular, their representations are coordinatewise orthogonal. Furthermore, if $\mc{P}$ is separating, then \eqref{eq:orthogonal_representation} holds for an optimal solution of (R1).
\end{lemma}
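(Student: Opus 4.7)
The plan is to exploit the quadratic constraint $(\lambda^j)^\T \Omega \lambda^j = 0$ built into $\Delta^m_\Omega$, which together with nonnegativity of $\lambda^j$ forces it to concentrate on a single block of coordinates. I would first make this block structure precise, then derive both directions of the iff by comparing these blocks across different indices $j$.

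The key preliminary step is to show that for each $j$ there is a unique $s(j) \in [q]$ with $\lambda^j_{v(s(j))} \neq 0$ and $\lambda^j_{v(s)} = 0$ for all $s \neq s(j)$. Writing $\sigma_s := \la e, \lambda^j_{v(s)} \ra \geq 0$ for the $s$-th block sum, the orthogonality constraint expands as
\[
0 \;=\; (\lambda^j)^\T \Omega \lambda^j \;=\; \sum_{r \neq s} \sigma_r \sigma_s \;=\; \Big(\sum_s \sigma_s\Big)^2 - \sum_s \sigma_s^2 \;=\; 1 - \sum_s \sigma_s^2,
\]
using $\sum_s \sigma_s = 1$. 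Since the $\sigma_s$ are nonnegative with unit sum, this forces exactly one $\sigma_{s(j)}$ to equal $1$ and the others to vanish. Consequently $x_j = V_{s(j)} \lambda^j_{v(s(j))}$ is a convex combination of the vertices of $P_{s(j)}$, so $x_j \in P_{s(j)}$.

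With $s(\cdot)$ in hand, both implications become short. For the forward direction, assume $\{x_j\}$ is separated. If $s(j) = s(j')$ for some pair with $x_j \neq x_{j'}$, then both points lie in $P_{s(j)}$, contradicting $|\{x_j\}\cap P_{s(j)}| \leq 1$; hence the assignment $s(\cdot)$ is injective on distinct centroids. For every $s \in [q]$ and any $j \neq j'$, at most one of $\lambda^j_{v(s)}$ and $\lambda^{j'}_{v(s)}$ is then nonzero, so the rank-one nonnegative outer product $\lambda^j_{v(s)}(\lambda^{j'}_{v(s)})^\T$ vanishes. For the converse, specializing the orthogonality at $s = s(j)$ gives $\lambda^j_{v(s(j))}(\lambda^{j'}_{v(s(j))})^\T = 0$; since $\lambda^j_{v(s(j))} \neq 0$, the second factor must vanish, and the preliminary step then forces $s(j') \neq s(j)$. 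Thus each $x_j$ lies in a distinct $P_{s(j)}$, yielding separation.

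The main technical nuance I would flag is the non-uniqueness of $\lambda^j$ when $x_j$ lies on a shared face $\bd(P_r) \cap \bd(P_s)$: the index $s(j)$ then depends on the chosen representation rather than on $x_j$ alone. Since the statement fixes the representations $\lambda^j$ as hypothesis, the preliminary step unambiguously reads $s(j)$ off the given $\lambda^j$, and the block argument proceeds without referring to which simplices geometrically contain $x_j$. Finally, the ``Furthermore'' clause is immediate: if $\mc{P}$ is separating, then by definition an optimal tuple $\{x_j\}$ of (R1) is separated by $\mc{P}$, so applying the forward direction to the $\lambda^j$ appearing in this optimum yields \eqref{eq:orthogonal_representation}.
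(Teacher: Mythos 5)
The paper offers no proof of this lemma at all (it is dispatched as a ``simple observation''), so there is nothing to compare against; your argument is the natural one and its core is sound. The key step --- expanding $(\lambda^j)^\T\Omega\lambda^j=\sum_{r\neq s}\sigma_r\sigma_s=1-\sum_s\sigma_s^2$ via the Kronecker structure of $\Omega$ and the normalization $\la e,\lambda^j\ra=1$, which forces each $\lambda^j$ to be supported on a single block $v(s(j))$ --- is exactly what makes the equivalence work, and both directions then correctly reduce to comparing the indices $s(j)$.

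One step is not airtight: the closing inference of the converse, ``each $x_j$ lies in a distinct $P_{s(j)}$, yielding separation.'' The criterion \eqref{eq:separation_criterion} counts membership in the \emph{closed} simplices, so if some $x_{j'}$ sits on a shared face $\bd(P_r)\cap\bd(P_{s(j')})$ with $r\neq s(j')$ while another $x_j$ lies in $P_r$, then $|\{x_j\}_{j\in[k]}\cap P_r|=2$ even though the given representations are blockwise orthogonal; the ``if'' direction is literally false in this configuration, so no argument can close the gap. You flag exactly this boundary non-uniqueness but then set it aside, when it is in fact the one place the equivalence can fail. The blame lies with the lemma statement rather than with you --- the remark following \eqref{eq:def-Delta-m-Omega} shows the authors implicitly restrict to $\dotbigcup_{s\in[q]}\intr(P_s)$, and under that reading (or reading ``$x_j\in P_s$'' as ``$\lambda^j$ is supported on $v(s)$'') your proof is complete. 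A similar implicit assumption, that the $x_j$ are pairwise distinct, is needed in your forward direction, where injectivity of $s(\cdot)$ is only established for pairs with $x_j\neq x_{j'}$.
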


This simple observation implies that we can encode the membership constraint in \eqref{eq:membership_constraint} with linear inequalities and quadratic constraints.

\begin{theorem}\label{thm:membership}
Let $\mc{P}$ in \eqref{eq:triangulation} be separating and let 
\begin{equation}
\lambda_* := \sum_{j\in [k]}\lambda^j \label{eq:*definition}
\end{equation}
for the optimal solution $\{\lambda^j\}_{j\in [k]}$ in (R1). Then for $\lambda\in \Delta^m_\Omega$ we have the equivalence
\begin{equation}
\lambda \in \{\lambda^j\}_{j\in[k]}\quad \Leftrightarrow \quad \lambda\leq \lambda_*.\label{eq:containment}
\end{equation}
\end{theorem}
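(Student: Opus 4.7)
The plan is to exploit the strong block-disjointness of the optimal $\{\lambda^j\}_{j\in[k]}$ that is supplied by Lemma~\ref{lem:orthogonal}. The forward direction of \eqref{eq:containment} is immediate: each $\lambda^j$ lies in $\R^m_+$, so $\lambda_* = \sum_{j\in[k]}\lambda^j$ dominates every summand entrywise. For the reverse direction, the starting point is to read \eqref{eq:orthogonal_representation} as an \emph{outer-product} identity: whenever $j\neq j'$ and $s\in[q]$, $\lambda^j_{v(s)}(\lambda^{j'}_{v(s)})^\T = 0$ forces at least one of $\lambda^j_{v(s)}$, $\lambda^{j'}_{v(s)}$ to be the zero vector. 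This is combined with the observation that every $\lambda\in\Delta^m_\Omega$ is supported on exactly one block $v(s)$; indeed, expanding $\Omega=(J_q-I_q)\otimes J_{d+1}$ gives $\lambda^\T\Omega\lambda = 1 - \sum_{s\in[q]}(e^\T\lambda_{v(s)})^2$, so $\lambda^\T\Omega\lambda = 0$ together with $\lambda\geq 0$ and $e^\T\lambda = 1$ forces exactly one block mass $e^\T\lambda_{v(s)}$ to equal $1$ and all others to vanish. Applied to the optimal $\lambda^j$, this shows that the block indices $s_1,\dots,s_k$ are pairwise distinct; consequently $(\lambda_*)_{v(s_j)} = \lambda^j_{v(s_j)}$ for each $j\in[k]$ while $(\lambda_*)_{v(s)} = 0$ for every remaining block.

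Now let $\lambda\in\Delta^m_\Omega$ satisfy $\lambda\leq\lambda_*$. By the same block-mass argument, $\lambda$ itself is supported on a single block $v(s^*)$ with $e^\T\lambda_{v(s^*)} = 1$. Since $\lambda_{v(s^*)}\neq 0$ and $\lambda_{v(s^*)}\leq(\lambda_*)_{v(s^*)}$, the block $v(s^*)$ cannot be one on which $\lambda_*$ vanishes, so $s^* = s_{j^*}$ for some $j^*\in[k]$, giving $\lambda_{v(s^*)}\leq\lambda^{j^*}_{v(s^*)}$ entrywise. Both vectors lie in the $(d+1)$-dimensional probability simplex, and the elementary fact that a nonnegative vector dominated by a probability vector of the same total mass must coincide with it yields $\lambda_{v(s^*)} = \lambda^{j^*}_{v(s^*)}$. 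Since $\lambda$ and $\lambda^{j^*}$ both vanish outside block $v(s^*)$, we conclude $\lambda = \lambda^{j^*}\in\{\lambda^j\}_{j\in[k]}$.

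The only subtle point is the correct reading of \eqref{eq:orthogonal_representation} as an outer-product identity, which is what makes the block indices $s_j$ of the optimal solution pairwise distinct and thereby turns $\lambda_*$ into a disjoint union of probability-simplex vectors; once this is in hand, the rest of the proof is essentially the uniqueness of a probability vector among its own lower bounds of equal total mass.
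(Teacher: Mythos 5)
Your proof is correct and takes essentially the same route as the paper's: both reduce the reverse implication to showing that $\lambda$ is supported in a single block, which by $\lambda\leq\lambda_*$ must be one of the pairwise distinct blocks $v(s_j)$ carrying the $\lambda^j$, and then conclude $\lambda=\lambda^{j'}$ from $0\leq\lambda\leq\lambda^{j'}$ together with $\la\lambda,e\ra=1=\la\lambda^{j'},e\ra$. You merely make explicit two steps the paper leaves implicit, namely the expansion of $\lambda^\T\Omega\lambda$ and the pairwise distinctness of the blocks $s_j$ coming from Lemma~\ref{lem:orthogonal}.
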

\begin{proof}
Implication $" \Rightarrow "$ is straightforward. Consider the reverse direction. Denoting by $\supp(\lambda)$ the support of $\lambda$, it follows from $\lambda^\T \Omega \lambda =0$, $0\leq \lambda \leq \lambda_*$  and Lemma \ref{lem:orthogonal} that we have 
\begin{equation}
\supp(\lambda) \subseteq v(s_{j'})\subseteq \supp(\lambda_*)=\dot{\bigcup_{j\in [k]} } \supp(\lambda^j)= \dot{\bigcup_{j\in [k]} } v(s_j)
\end{equation}
for some $j'\in [k]$, where $\supp(\lambda^j)=v(s_j)$ for all $j\in[k]$. Therefore,
\begin{equation}
0\leq \lambda \leq \lambda^{j'} \quad\text{ and }\quad \la \lambda,e\ra = 1 = \la \lambda^{j'},e\ra,
\end{equation}
which is only possible if $\lambda = \lambda^{j'}$.
\end{proof}

Theorem \ref{thm:membership} reduces the membership in $\{\lambda^j\}_{j\in k}$ to a more tractable relation involving $\lambda_*$. However, since $\lambda_*$ encodes the variables that we try to optimize, we still need to give a proper characterization of those $\lambda_*$ corresponding to separated solutions $\{\lambda^j\}_{j\in [k]}$. 

\begin{lemma}
Let 
\begin{equation}
\mc{L}:= \left\{ \{\lambda^j\}_{j\in[k]}\subseteq \Delta_\Omega^m \,|\, \eqref{eq:orthogonal_representation}\text{  holds}\right\}
\end{equation}
and
\begin{equation}
\mc{L'}:= \{\lambda\in k\cdot \Delta^m \,|\, \la \lambda_{v(s)},e\ra \lambda_{v(s)} = \lambda_{v(s)} \quad \forall s\in [q]\}.\label{eq:L'}
\end{equation}
Then $\mc{L}$ and $\mc{L'}$ are in one-to-one correspondence with a bijection $\phi: \mc{L} \rightarrow \mc{L'}$ given by 
\begin{equation}
\phi(\{\lambda^j\}_{j\in[k]}):=\sum_{j\in [k]} \lambda^j.
\end{equation}
\end{lemma}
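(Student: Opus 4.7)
The plan is to construct an explicit inverse $\psi : \mc{L'} \to \mc{L}$ and verify that $\phi$ and $\psi$ compose to identities. The whole argument turns on one structural observation: an element $\mu \in \Delta^m_\Omega$ must be supported on a single block $v(s)$. Indeed, because $\Omega = (J_q - I_q) \otimes J_{d+1}$, one has
\begin{equation}
0 = \mu^\T \Omega \mu = \sum_{r \neq s} \la \mu_{v(r)}, e\ra \la \mu_{v(s)}, e\ra,
\end{equation}
and since $\mu \geq 0$ with $\la \mu, e\ra = 1$, exactly one of the block sums $\la \mu_{v(s)}, e\ra$ equals $1$ and the others vanish.

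I would then verify well-definedness of $\phi$. For $\{\lambda^j\}_{j\in[k]} \in \mc{L}$ and $\lambda := \sum_j \lambda^j$, clearly $\lambda \geq 0$ with $\la \lambda, e\ra = k$, so $\lambda \in k \cdot \Delta^m$. By the observation above, each $\lambda^j$ is concentrated on a single block $v(s_j)$ with $\la \lambda^j_{v(s_j)}, e\ra = 1$. The orthogonality condition \eqref{eq:orthogonal_representation}, combined with nonnegativity, forces the supporting blocks $v(s_j)$ to be pairwise distinct (otherwise $\lambda^j_{v(s)}(\lambda^{j'}_{v(s)})^\T$ would be a sum of nonnegative terms with at least one strictly positive contribution). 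Hence $\lambda_{v(s)}$ either vanishes or equals a single $\lambda^j_{v(s_j)}$ with $\la \lambda_{v(s)}, e\ra = 1$, and in both cases $\la \lambda_{v(s)}, e\ra \lambda_{v(s)} = \lambda_{v(s)}$, so $\lambda \in \mc{L'}$.

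For the inverse, I would argue as follows. Given $\lambda \in \mc{L'}$, the constraint \eqref{eq:L'} implies $\la \lambda_{v(s)}, e\ra \in \{0, 1\}$ for each $s$ (reading off any positive coordinate). Since $\la \lambda, e\ra = k$, there are exactly $k$ indices $s_1, \ldots, s_k \in [q]$ with $\lambda_{v(s_j)} \neq 0$. Define $\lambda^j \in \R^m$ by copying $\lambda_{v(s_j)}$ into block $s_j$ and zeroing out all other blocks, and set $\psi(\lambda) := \{\lambda^j\}_{j \in [k]}$. By construction each $\lambda^j \in \Delta^m$ is supported on the single block $v(s_j)$, so $\lambda^j \in \Delta^m_\Omega$, and distinct supports immediately give \eqref{eq:orthogonal_representation}, so $\psi(\lambda) \in \mc{L}$.

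Finally, $\phi \circ \psi = \id_{\mc{L'}}$ holds since the $\lambda^j$'s partition the nonzero blocks of $\lambda$ by construction. Conversely, $\psi \circ \phi = \id_{\mc{L}}$: for $\{\lambda^j\}_{j\in[k]} \in \mc{L}$, the single-block-support property together with the disjointness of the $v(s_j)$'s shows that $\phi(\{\lambda^j\})$ has nonzero blocks precisely at $v(s_1), \ldots, v(s_k)$, and $\psi$ reads off exactly the original summands. The only mildly delicate point is the passage from nonnegativity plus \eqref{eq:orthogonal_representation} to disjointness of block-supports; everything else is straightforward bookkeeping on the block structure of $\Delta^m_\Omega$.
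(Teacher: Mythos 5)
Your proposal is correct and follows essentially the same route as the paper: construct the explicit inverse $\psi$ by reading off the nonzero blocks of $\lambda$ (using that \eqref{eq:L'} forces $\la \lambda_{v(s)},e\ra \in \{0,1\}$) and verify both compositions blockwise. The only difference is presentational: you spell out the well-definedness of $\phi$ via the single-block-support property of $\Delta^m_\Omega$ and the disjointness of the supporting blocks, which the paper dismisses as obvious and instead delegates to Lemma \ref{lem:orthogonal} in the $\psi\circ\phi$ step.
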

\begin{proof}
It is obvious that $\phi$ is well-defined. We proceed by constructing a function $\psi:\mc{L'}\rightarrow \mc{L}$, so let $\lambda\in \mc{L'}$. By taking the scalar product with $e$ on the defining equation in \eqref{eq:L'}, we see that $\la \lambda_{v(s)},e\ra \in \{0,1\}$. So by definition, there is a set $\{s_j\}_{j\in [k]}\subseteq[q]$ such that $\la\lambda_{v(s_j)},e\ra =1$ for all $j\in [k]$. Now define vectors $\{\lambda^j\}_{j\in [k]}$ according to
\begin{equation}
 \lambda^j_{v(s)}:=\begin{cases} \lambda_{v(s)} & \text{if } s=s_j, \\ 0& \text{else}  \end{cases} \quad  \forall s\in [q],\quad \forall j\in [k]
\end{equation}
and set $\psi(\lambda):=\{\lambda^j\}_{j\in [k]}$. It is easy to check that $\psi$ is well-defined. Now for $\lambda\in \mc{L'}$ one has

\begin{equation}
\phi(\psi(\lambda))_{v(s)}=\sum_{j\in [k]} \lambda^j_{v(s)} = \begin{cases} \lambda_{v(s)} & \text{if } s\in \{s_j\}_{j\in [k]}, \\ 0=\lambda_{v(s)} & \text{else,}  \end{cases}
\end{equation}
which shows $\phi \circ \psi = id_\mc{L'}$.
For $\{\lambda^j\}_{j\in[k]} \in \mc{L}$, let $\lambda=\phi(\{\lambda^j\}_{j\in[k]})$. Then we can choose $\{s_j\}_{j\in [k]}$ such that 
\begin{equation}
1=\la \lambda_{v(s_j)},e\ra = \sum_{j'\in [k]} \la \lambda^{j'}_{v(s_j)},e\ra = \la \lambda^{j}_{v(s_j)},e\ra
\end{equation}
by Lemma \ref{lem:orthogonal}, which shows $\psi \circ \phi = id_\mc{L}$.
\end{proof}

We are now prepared to restate variant \eqref{eq:membership_constraint} of (R1) as the following, symmetry free polynomial optimization problem.

\begin{subequations}\label{eq:SecondReformulation}
\begin{align}
\text{(R2)}\quad  \min_{\lambda}   \sum_{i\in[n]} & \la \lambda_i, W_i \lambda_i\ra	 \\
\text{s.t.} \enspace
 \lambda_*  &\in k\cdot \Delta^m,  & \la (\lambda_*)_{v(s)},e\ra (\lambda_*)_{v(s)} &= (\lambda_*)_{v(s)} & \forall s\in [q],\\
 \lambda_i &\in\Delta^{m}_\Omega, & \lambda_i &\leq \lambda_* & \forall i\in[n].
\end{align}
\end{subequations}
\begin{corollary}\label{cor:R2equivalence}
(R2) is equivalent to finding the optimal separated solution of~$(R1)$. In particular, if $\mc{P}$ in \eqref{eq:triangulation} is separating, then both problems are equivalent.
\end{corollary}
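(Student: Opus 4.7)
The plan is to establish a value-preserving correspondence between separated feasible solutions of (R1) and feasible solutions of (R2), and then invoke the definition of \textbf{separating} to get the second claim as a direct consequence. Essentially all the heavy lifting has already been done by Theorem \ref{thm:membership} and the preceding bijection lemma, so the task is to marshal these pieces correctly.

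First, given a separated feasible solution $(U, \{\lambda^{j}\}_{j\in[k]})$ of (R1), I would set $\lambda_{*} := \sum_{j\in[k]} \lambda^{j}$. By Lemma \ref{lem:orthogonal}, $\{\lambda^{j}\}_{j\in[k]} \in \mc{L}$, and hence the bijection lemma yields $\lambda_{*} \in \mc{L}'$, verifying precisely the two constraints on $\lambda_{*}$ in (R2). For each $i \in [n]$, I would let $j(i)$ be the unique index with $u_{i,j(i)} = 1$ (which exists by $Ue=e$ and the binary constraint) and set $\lambda_{i} := \lambda^{j(i)}$. Since $\lambda_{i} \in \Delta^{m}_{\Omega}$ and since the $\lambda^{j}$ have pairwise disjoint supports with nonnegative entries, $\lambda_{i} \le \lambda_{*}$ holds coordinatewise. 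The objective of (R1) collapses via $Ue=e$ to $\sum_{i} \la \lambda^{j(i)}, W_{i}\lambda^{j(i)}\ra = \sum_{i}\la \lambda_{i}, W_{i}\lambda_{i}\ra$, matching (R2).

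Conversely, given $(\lambda_{*}, \{\lambda_{i}\}_{i\in[n]})$ feasible for (R2), the inverse map $\psi$ from the bijection lemma produces $\{\lambda^{j}\}_{j\in[k]} \in \mc{L}$ with $\sum_{j}\lambda^{j} = \lambda_{*}$. Since each $\lambda_{i}$ satisfies $\lambda_{i} \in \Delta^{m}_{\Omega}$ and $\lambda_{i}\le \lambda_{*}$, Theorem \ref{thm:membership} yields $\lambda_{i} \in \{\lambda^{j}\}_{j\in[k]}$. Because the $\lambda^{j}$ have pairwise disjoint supports and unit mass, they are distinct, so I can unambiguously define $U$ by $u_{i,j}=1$ iff $\lambda_{i}=\lambda^{j}$; this $(U, \{\lambda^{j}\})$ is a separated feasible solution of (R1) with the same objective value. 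Combined with the previous direction, this shows that (R2) computes exactly the optimum over separated solutions of (R1).

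Finally, if $\mc{P}$ is separating, then by definition an optimal centroid configuration of \eqref{eq:ProblemFormulation} is separated by $\mc{P}$, which translates via the parametrization \eqref{eq:Parametrization} to an optimal solution of (R1) being separated. Consequently, the optimum over separated solutions of (R1) coincides with the unconstrained optimum of (R1), and the full equivalence follows. The only point requiring care is the objective-matching step and the well-definedness of the reconstruction of $U$; both are routine once Lemma \ref{lem:orthogonal} is used to rule out coincidences between distinct $\lambda^{j}$.
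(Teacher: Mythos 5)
Your proposal is correct and follows exactly the route the paper intends: the corollary is left as an immediate consequence of Lemma \ref{lem:orthogonal}, the bijection $\phi\colon\mc{L}\to\mc{L}'$, and Theorem \ref{thm:membership}, and your argument simply makes the value-preserving correspondence between separated feasible solutions of (R1) and feasible solutions of (R2) explicit. The points you flag as needing care (distinctness of the $\lambda^{j}$ for reconstructing $U$, and applying Theorem \ref{thm:membership} to an arbitrary separated family rather than only the optimal one) are handled correctly.
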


\section{SDP Relaxations}\label{sec:relaxation}
In this section we will exploit available theoretical results to approximate (R2) 
by a hierarchy of conic programs (R2)[t]. After some simplifications of (R2)[1], we will also introduce a relaxation of (R2)[1] which is faster to solve.

\subsection{The Hierarchy $(R2)[t]$}

In order to apply results from section \ref{sec:sdp_relax} to (R2), we have to give an explicit list of polynomial inequalities. To this end, we will use the following system, where each coordinate corresponds to one polynomial (in)equality:

\begin{subequations}\label{eq:ConstraintReformulation}
\begin{align}
\min_{\lambda}   \sum_{i\in[n]}  \la \lambda_i, &W_i \lambda_i\ra\quad s.t.	 \label{eq:sys1}\\
     \lambda_*       & \geq \lambda_i,     &                \lambda_i & \geq 0    & \forall i\in [n]\\
 \la \lambda_*, e\ra &   =      k,         &     \la \lambda_i , e\ra &   =  1    & \forall i\in [n]\\
 \la (\lambda_*)_{v(s)}, e\ra (\lambda_*)_{v(s)} &= (\lambda_*)_{v(s)} & \forall s & \in [q]\\
  (\lambda_i)_{v(s)} (\lambda_i)_{v(t)}^\T &= 0 & \forall i &\in [n],&\enspace\forall s\neq t\in [q]\label{eq:sys2}\\
 (\lambda_*)_{v(s)} (\lambda_i)_{v(s)}^\T &= (\lambda_i)_{v(s)} (\lambda_i)_{v(s)}^\T & \forall  i &\in [n],&\enspace\forall s\in [q]\label{eq:sys3}
\end{align}
\end{subequations}

One can easily check that \eqref{eq:sys1}-\eqref{eq:sys2}  is a reformulation of (R2). Additionally, we add the redundant equations \eqref{eq:sys3} implied by Lemma \ref{lem:orthogonal} since they have low degree and directly reduce the number of moments we have to consider in LMM. For $t\geq 1$, we can therefore construct the hierarchy \eqref{eq:LMM} accordingly, where we will call the optimization problem corresponding to the $t$-th step in the hierarchy as (R2)[t].

%
%
%
%
%

\subsection{Simplifying (R2)[1]}\label{sec:Simplifying}

In practice, we can only compute $(R2)[t]$ for small values of $t$. In this section, we will investigate $(R2)[1]$ in more detail and show that it can be simplified to get a smaller formulation that can be solved with current SDP solvers. To this end, we will first explicitly write down an SDP-representation of $(R2)[1]$ where we use the notation
\begin{equation}
M_1(y)=\bpm 
1         & \lambda_1^\T & \cdots & \lambda_n^\T & \lambda_*^\T \\
\lambda_1 & \Lambda_{11} & \cdots & \Lambda_{1n} & \Lambda_{1*} \\
\vdots    & \vdots       & \ddots & \vdots       & \vdots       \\
\lambda_n & \Lambda_{n1} & \cdots & \Lambda_{nn} & \Lambda_{n*} \\
\lambda^* & \Lambda_{*1} & \cdots & \Lambda_{*n} & \Lambda_{**} \\
\epm\succeq 0
\end{equation}
for the moment matrix involved. 

An important observation is that each polynomial in \eqref{eq:ConstraintReformulation} belongs to $\R[\lambda_i, \lambda_*]$ for some $i\in [n]$ and that these sets satisfy the running intersection property. Using results about sparse representations \cite[Section 8.1]{lasserre15}, this means we can ignore the matrices $\Lambda_{ij}$ for $i\neq j\in[n]$ and replace $M_1(y)\succeq 0$ by the collection of much smaller submatrices 
\begin{equation}
M_1(y| i):=\bpm 
1         & \lambda_i^\T & \lambda_*^\T \\
\lambda_i & \Lambda_{ii} & \Lambda_{i*} \\
\lambda^* & \Lambda_{*i} & \Lambda_{**} \\
\epm \succeq 0
\end{equation}
to get the reduced formulation
\begin{subequations}\label{eq:R2[1]}
\begin{align}
(R2)[1]&\quad \min_{\lambda}  \sum_{i\in[n]} \la W_i, \Lambda_{ii}\ra \\
\text{s.t.} \quad 
& \la \lambda_*, e\ra = k,\qquad \qquad\quad \Lambda_{**}e = k\lambda_*,\\
& (\Lambda_{**})_{v(s)} e = (\lambda_*)_{v(s)}   \qquad\qquad\qquad\;\;\,  \forall s  \in [q],\\
& \enspace \left.  
\begin{array}{rlrl}
(\Lambda_{i*})_{v(s)}&=(\Lambda_{ii})_{v(s)} & \quad\quad  \forall s & \in [q],\\
 \la \Lambda_{ii} ,\Omega\ra  &= 0, &  \\
 \la \lambda_i , e\ra &=1,          & \Lambda_{ii}e &=  \lambda_i, \\
 \Lambda_{*i}e &=  \lambda_*,       & \Lambda_{i*}e &= k\lambda_i, \\
\lambda_*&\geq \lambda_i \geq 0,    &  \Lambda_{**} \geq \Lambda_{*i} \geq \Lambda_{ii} & \geq 0,\\
  M_1(y|i) &\succeq 0
\end{array}
\right\rbrace \forall i\in [n].
\end{align}
\end{subequations}

Fortunately, we can also discard the linear monomials $\lambda_i$ and $\lambda_*$ with the help of the following Lemma.

\begin{lemma}\label{lem:miraculous_row_sum}
Consider a matrix $\Lambda\succeq 0$ and a vector $a$. Let $a^\T \Lambda a=\nu$ and define $\lambda:=\Lambda a$. Then 
\begin{equation} \label{eq:miraculous_row_sum_condition}
\nu \Lambda\succeq \lambda\lambda^\T \text{ or equivalently } \bpm \nu & \lambda^\T\\ \lambda & \Lambda \epm\succeq 0.
\end{equation}
\end{lemma}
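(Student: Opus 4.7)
The plan is to first establish the equivalence of the two formulations in \eqref{eq:miraculous_row_sum_condition} via a Schur-complement argument, and then prove the PSD inequality $\nu\Lambda\succeq \lambda\lambda^\T$ using an explicit factorization of $\Lambda$ combined with the Cauchy--Schwarz inequality.

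For the equivalence, I would observe that the block matrix $\left(\begin{smallmatrix} \nu & \lambda^\T \\ \lambda & \Lambda \end{smallmatrix}\right)$ is PSD if and only if the quadratic form $\nu t^2 + 2 t \lambda^\T v + v^\T \Lambda v$ is nonnegative for all scalars $t$ and vectors $v$. When $\nu>0$ this is equivalent, by completing the square in $t$, to $\Lambda - \lambda\lambda^\T/\nu \succeq 0$, i.e.\ $\nu\Lambda\succeq \lambda\lambda^\T$. The degenerate case $\nu=0$ is handled separately and, as shown below, forces $\lambda=0$, so both statements reduce to $\Lambda\succeq 0$.

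The main content is then the inequality itself. Since $\Lambda\succeq 0$, factor $\Lambda = B^\T B$ for some matrix $B$, and set $u := Ba$. Then
\begin{equation}
\nu = a^\T \Lambda a = \|u\|^2, \qquad \lambda = \Lambda a = B^\T u,
\end{equation}
and a direct calculation gives
\begin{equation}
\nu \Lambda - \lambda \lambda^\T = \|u\|^2 B^\T B - B^\T u u^\T B = B^\T \bigl(\|u\|^2 I - u u^\T\bigr) B.
\end{equation}
The matrix $\|u\|^2 I - uu^\T$ is PSD, since for every $v$ the Cauchy--Schwarz inequality yields $v^\T(\|u\|^2 I - uu^\T)v = \|u\|^2\|v\|^2 - (u^\T v)^2 \geq 0$. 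Conjugating by $B^\T$ preserves positive semidefiniteness, which gives the desired $\nu\Lambda\succeq \lambda\lambda^\T$. The degenerate case is automatic from the same factorization: $\nu=0$ forces $u=0$ and hence $\lambda = B^\T u = 0$, so both sides of the inequality vanish.

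There is no serious obstacle; the only subtlety to flag is the boundary case $\nu=0$, which the factorization handles cleanly. No assumption of invertibility on $\Lambda$ or positivity of $\nu$ is needed.
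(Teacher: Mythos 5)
Your proof is correct and follows essentially the same route as the paper: factor $\Lambda = L^\T L$ (your $B^\T B$) and reduce $\nu\Lambda \succeq \lambda\lambda^\T$ to the Cauchy--Schwarz inequality, with the block form obtained via the Schur complement. Your explicit treatment of the degenerate case $\nu = 0$ is a small extra care the paper leaves implicit, but it does not change the argument.
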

\begin{proof}
Since $\Lambda\succeq 0$, there is $L$ such that $\Lambda=L^\T L$ and consequently $\nu=a^\T \Lambda a= \|La\|_2^2$. Then for arbitrary $x$ we have
\begin{align*}
x^\T (\nu \Lambda-(\Lambda a)(\Lambda a)^\T)x &=\nu\cdot \langle Lx, Lx\rangle - \langle Lx , La \rangle^2\\
&=\|La\|^2_2 \cdot \|Lx\|^2_2 - |\langle Lx, La\rangle|^2 \geq 0
\end{align*}
where the last inequality is the Cauchy-Schwarz inequality. The equivalent second formulation of \eqref{eq:miraculous_row_sum_condition} follows from the Schur Complement Theorem.
\end{proof}

In terms of \eqref{eq:R2[1]}, using Lemma \ref{lem:miraculous_row_sum} on 
\begin{equation}
\bpm \Lambda_{ii} & \Lambda_{i*}\\ \Lambda_{*i} & \Lambda_{**}\epm \succeq 0
\end{equation}
with the vector $a=\bpm e& 0\epm^\T$ already yields the condition $M_1(y|i)\succeq 0$, which means we can discard the linear monomials in the following equivalent formulation

\begin{subequations}\label{eq:R2'[1]}
\begin{align}
(R2')[1]&\quad \min_{\lambda}  \sum_{i\in[n]} \la W_i, \Lambda_{ii}\ra \\
\text{s.t.} & \enspace \left. 
\begin{array}{rlrl}
(\Lambda_{i*})_{v(s)}&=(\Lambda_{ii})_{v(s)} & \quad \forall s & \in [q],\\
(\Lambda_{**})_{v(s)} e & = \left( \Lambda_{*i} e\right)_{v(s)}& \quad \forall s & \in [q],\\
 \la \Lambda_{ii} ,\Omega\ra  &= 0,\\
 k\Lambda_{ii}e &=  \Lambda_{i*}e, &  \la \Lambda_{ii},J\ra &=  1 , \\
 k\Lambda_{*i}e &=  \Lambda_{**}e, &  \la \Lambda_{*i},J\ra &=  k, \\
\Lambda_{**} \geq \Lambda_{*i} &\geq \Lambda_{ii}  \geq 0, &
 \bpm \Lambda_{ii} & \Lambda_{i*}\\ \Lambda_{*i} & \Lambda_{**}\epm  &\succeq 0
\end{array}
\right\rbrace \forall i\in [n].
\end{align}
\end{subequations}
This reformulation uses $n$ SDP matrices of dimension $2m=2q(d+1)$, which is still very limiting. Note, however, that since $\la \Lambda_{ii}, \Omega \ra =0$, there is still a huge sparsity pattern in the blockdiagonal $\Lambda_{ii}$, which is not properly exploited.

For this reason, we propose to relax $(R2')[1]$ by dropping all variables in $\Lambda_{i*}$ and $\Lambda_{**}$ that do not belong to the blockdiagonal structure induced by $\Omega$. Effectively, this means we lose information of entries in $\Lambda_{**}$ that only have an indirect impact on $\Lambda_{ii}$. This turns each SDP constraint in $(R2')[1]$ into $q$ separate SDP constraints
\begin{equation}
\bpm (\Lambda_{ii})_{v(s)} & (\Lambda_{i*})_{v(s)}\\ (\Lambda_{*i})_{v(s)} & (\Lambda_{**})_{v(s)}\epm  \succeq 0 \quad \forall s\in [q]
\end{equation}
of size $2(d+1)$, which is again much smaller. However, since 
\begin{equation}
(\Lambda_{i*})_{v(s)}=(\Lambda_{ii})_{v(s)},
\end{equation} this is equivalent to
\begin{equation}
(\Lambda_{**})_{v(s)}\succeq (\Lambda_{ii})_{v(s)} \quad \forall s\in [q],
\end{equation}
since $\bpm A & A\\ A & B\epm\succeq 0$ is equivalent to $B\succeq A\succeq 0$ as a consequence of the Schur complement theorem.

Formally, we end up with the following relaxation of $(R2)[1]$, which we will call
\begin{subequations}\label{eq:R2"[1]}
\begin{align}
(R2'')[1]&\quad \min_{\lambda}  \sum_{i\in[n]} \la W_i, \Lambda_{ii}\ra \\
\text{s.t.} & \enspace \left. 
\begin{array}{rlrl}
 \la \Lambda_{ii} ,\Omega\ra &= 0, & \la \Lambda_{ii},      J\ra &= 1, \\
 (\Lambda_{**})_{v(s)} &\succcurlyeq (\Lambda_{ii})_{v(s)}  \succcurlyeq 0 & \forall s&\in[q]
\end{array}
\right\rbrace \forall i\in [n],\\
& \quad \sum_{s\in [q]}\la (\Lambda_{**})_{v(s)},J\ra =  k.
\end{align}
\end{subequations}
Note that the last constraint follows from $(R2')[1]$ as
\begin{equation}
k = \la \Lambda_{i*},J\ra = \sum_{s\in [q]} e_{v(s)}^\T (\Lambda_{*i}e)_{v(s)}
= \sum_{s\in [q]} e^\T (\Lambda_{*i})_{v(s)} e.
\end{equation}

\subsection{The variant (R3)[t]}

Instead of introducing $\lambda_*$ as the sum of optimal parameters $\lambda^j$ in \eqref{eq:*definition}, we might directly work with the corresponding moment sequences. Letting $y^j\in \mc{N}^*(\Delta_\Omega^d)$ denote the moment sequence of $\lambda^j$, we can define
\begin{equation}
y_* = \sum_{j\in [k]} y^j \in \mc{N}^*(\Delta_\Omega^d),\quad (y_*)_0=k,
\end{equation}
to get the implication
\begin{equation}
y\in \{y^j\}_{j\in [k]} \quad \Rightarrow \quad y, y^*-y\in \mc{N}^*(\Delta_\Omega^d),\quad y_0 = 1
\end{equation}
as a weaker alternative to \eqref{eq:containment}.
Applying LMM on the set $\mc{N}^*(\Delta_\Omega^d)$ then leads to the hierarchy 
\begin{subequations}\label{eq:R3}
\begin{align}
\text{(R3)[t]}\quad  \min_{y } & \sum_{i\in[n]} L_{y_i}(W_i)\\
\text{s.t.} & \quad
\begin{array}{rlcrl}
y_i & \in \mc{N}^*_t (\Delta_\Omega^m),&& (y_i)_0 &= 1,\\
y^*-y_i &\in \mc{N}^*_t (\Delta_\Omega^m),&& y^*_0-(y_i)_0 &= k-1
\end{array}
\quad \forall i\in[n],
\end{align}
\end{subequations}
where it can be shown that for $t=1$, this coincides with $(R2'')[1]$ after properly reformulating \eqref{eq:R2"[1]}.

$(R3)[1]$ uses $2nq$ SDP constraints of the rather small size $d+1$ and is only weakly coupled, so that parallel computing schemes can be efficiently used to solve the relaxation for problems of moderate paramaters $(n,q,d)$. 

\begin{remark}
Model $(R3)[1]$ coincides with problem $(21)$ given in \cite{Silvestri15} in the respective setting. 
\end{remark}

\section{Rounding}\label{sec:rounding}

%

Given a solution $(y^*,y_i)$ to $(R3)[1]$, a rounding procedure has to determine 
a proper partition of $[n]$. To do this, we will use the information provided by the convex relaxation and a k-center clustering algorithm, as detailed next.

\begin{definition}[k-center Clustering] 
Given a set $K=\{x_i\}_{i\in [n]}\subseteq \mathbb{R}^d$ and a norm $\|\cdot \|$, the k-center clustering problem is defined as
\begin{equation}
\mc{C}^{\|\cdot \|}_\infty(K,k):=\min_{ C\subseteq K, |C|=k}\quad \max_{i\in[n]}\quad \min_{y\in C} \, \|y-x_i\|.\label{eq:Metric Clustering}
\end{equation}
\end{definition}

Since this problem is NP-hard, we need to use a heuristic instead, which should be insensitive to initializations. This can be achieved as follows.

\begin{theorem}[Approximating k-Center Clustering \cite{hochbaum85}]
For $d>2$, achieving an approximation ratio for \eqref{eq:Metric Clustering}  better than $2$ is NP-hard. A $2$-approximation is given by the following deterministic algorithm.

\begin{algorithm}[H]\label{alg:Farthest_Point_Clustering}
\caption{$\FPC(K,k,\|\cdot \|)$  (Farthest Point Clustering)} 
\SetKw{NOT}{not}
\SetKw{OR}{or}

\SetKwFunction{F}{F}

\KwData{Data $K=\{x_i\}_{i\in [n]}\subseteq \mathbb{R}^d$, norm $\|\cdot \|$, $k\in [n]$}
\KwResult{Centers $C\subseteq \{x_i\}_{i\in [n]}, |C|=k$}

$B \leftarrow \infty$, $C \leftarrow \emptyset$\;
\For{$i\in[n]$}{
	$C_i\leftarrow \emptyset$,	$c_1 \leftarrow x_i$\;	
	\For{$j\in [k]$}{
		$C_i \leftarrow C_i\cup \{c_j\}$, $c_{j+1}\leftarrow \argmax_{x\in K}\min_{y\in C_i} \|x-y\|$\;		
	}
	\If{$\min_{y\in C_i} \|c_{k+1}-y\| <B$}{
		$B\leftarrow \min_{y\in C_i} \|c_{k+1}-y\|$, $C\leftarrow C_i$\;
	}

}

\Return{$C$}\;
\end{algorithm}
\end{theorem}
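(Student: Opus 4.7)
The plan is to split the claim into its two parts: the NP-hardness lower bound, and the upper bound realized by Algorithm~\ref{alg:Farthest_Point_Clustering}. For the former, I would invoke the reduction in \cite{hochbaum85} from a suitable NP-hard decision problem (in the spirit of Dominating Set, geometrically encoded using the extra coordinate available for $d \geq 3$), where any algorithm attaining a ratio strictly better than $2$ would resolve the underlying decision, and treat the construction as a black box rather than reconstruct it.

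For the $2$-approximation, I would fix $r^* := \mc{C}_\infty^{\|\cdot\|}(K,k)$ with optimal centers $C^* = \{c_1^*,\ldots,c_k^*\}$ and analyze one pass of the outer loop, which produces greedy candidates $c_1,\ldots,c_k$ together with a tentative $c_{k+1}$ (so that the quantity $B$ updated by the algorithm equals $\min_{y \in C_i}\|c_{k+1}-y\|$). The key step is a pigeonhole argument on the $k+1$ points $c_1,\ldots,c_{k+1}$: two of them, say $c_a$ and $c_b$ with $a<b$, must fall in the same optimal cluster, and the triangle inequality then yields $\|c_a-c_b\| \leq 2r^*$. Next, the farthest-point rule at iteration $b$ guarantees that $c_b$ maximizes $\min_{y\in\{c_1,\ldots,c_{b-1}\}}\|\cdot-y\|$, so in particular
\[
\min_{y\in\{c_1,\ldots,c_{b-1}\}}\|c_{k+1}-y\| \;\leq\; \min_{y\in\{c_1,\ldots,c_{b-1}\}}\|c_b-y\| \;\leq\; \|c_b-c_a\| \;\leq\; 2r^*.
\]
Since this minimum only shrinks when the further centers $c_b,\ldots,c_k$ are appended, one obtains $B \leq 2r^*$. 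Finally, by the very maximality built into the selection of $c_{k+1}$, the value $B$ is itself an upper bound on $\max_{x\in K}\min_{c\in C_i}\|x-c\|$, so the outer loop returns some $C$ with $\max_{x\in K}\min_{c\in C}\|x-c\| \leq 2r^*$, as desired.

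The step I expect to dominate the effort is the NP-hardness direction, which is delicate precisely because it is dimension-sensitive: the geometric reduction must faithfully encode a combinatorial instance into $\R^d$ while ensuring that any approximation below the threshold $2$ exactly decides the encoded problem. The upper bound, in contrast, boils down to a single pigeonhole observation combined with the triangle inequality and the monotonicity of the greedy minimum, which is the part I would write out in full in a self-contained proof.
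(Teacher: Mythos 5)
Your proposal is correct, and it matches the paper's treatment: the paper does not prove this theorem at all but imports both parts from the cited reference \cite{hochbaum85}, so deferring the NP-hardness reduction to that source is exactly what the authors do. Your reconstruction of the $2$-approximation bound --- pigeonholing the $k+1$ greedy points $c_1,\ldots,c_{k+1}$ into the $k$ optimal clusters, bounding $\|c_a-c_b\|\leq 2r^*$ via the triangle inequality through the shared optimal center, and using the maximality of the farthest-point rule together with the monotonicity of the greedy min-distance to conclude $B\leq 2r^*$ --- is the standard argument and is sound for any norm, since every run of the outer loop already achieves the bound and the algorithm returns the best one.
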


Algorithm \ref{alg:Farthest_Point_Clustering} greedily builds the set of cluster centers $C$ by iteratively choosing those points which are farthest away from all prior centers. As initialization, every point is chosen as the first cluster center once and the best overall result is kept as the output of the algorithm.

\noindent Our rounding procedere can now be described as follows, where $W=(W_i)_{i\in[n]}$ denotes the objective function.
\vspace*{3 mm}

\begin{algorithm}[H]\label{alg:Rounding}
\caption{k-Cluster Rounding} 
\KwData{Objective $W$ and solution $(y^*, y_i)$ of $(R3)[1]$.}
\KwResult{Solution to \eqref{eq:ProblemFormulation} of value $\rnd(W)$.}
Extract the second order moments $\Lambda_i$ of $y_i$ according to section \ref{sec:Simplifying}\;
set $\lambda_i = \Lambda_i e$ for all $i\in [n]$\;
set $U$ equal to the partition of the minimizer of $\FPC(\{\lambda_i\}_{i\in[n]},k,\ell_1)$\;
for fixed $U$, compute optimal centers $\{x_j\}_{j\in[k]}$ in \eqref{eq:ProblemFormulation}\; \label{line:fixed_partition}
set $\rnd(W)$ to the objective value of $(U,\{x_j\}_{j\in[k]})$ in \eqref{eq:ProblemFormulation}\;
\Return{$(U, \{x_j\}_{j\in[k]},\rnd(W))$ }\;
\end{algorithm}

Algorithm \ref{alg:Rounding} clusters the $\lambda$-representations from $(R3)[1]$ according to their $\ell_1$-norm to construct the assignment matrix $U$. Afterwards, the actual centers are computed as the analytic solution to \eqref{eq:ProblemFormulation} with fixed assignments.

\section{Extensions}\label{sec:extention}

We next comment on the choice of $(V,\Omega)$ in section \ref{sec:k-clustering} and then indicate a generalization of $\mc{P}$ to semialgebraic sets and more general objective functions. Even though the variants here are presented in terms of the original problem \eqref{eq:ProblemFormulation} and (R1), the machinery of section \ref{sec:notation} can be used in a straightforward way to process the modifications for (R2) and (R3[t]).

Recall that $\Omega$ is assumed to separate the individual parametrization of the local parameters $\{\lambda_i\}_{i\in[n]}$ in $V$ in the preceding sections. While this guarantees that $(R2)$ is a reformulation of $(R1)$, we might also consider relaxing this constraint by breaking up the blockdiagonal structure. While we lose much of the underlying theory this way, we may also gain a speed up heuristically.

\subsection{Unique Columns in $V$}\label{sec:variants}

As mentioned in Remark \ref{rem:redundantColumns}, we do not assume the columns of $V$ to be unique in Section \ref{sec:k-clustering}. This is done to ensure the blockdiagonal structure of $\Omega$, but can be relaxed.

\begin{example}
Let $\mc{P}=P_1\cup P_2$ where 
$V_1 = \bsm -1 & 0\\ 0 & 1\esm$ 
and 
$V_2 = \bsm  0 & 1\\ 1 & 0\esm$ 
to get 
$V = \bsm -1 & 0 & 0 & 1\\ 0 & 1 & 1 & 0\esm$ 
and 
$\Omega = \bsm 0 & J_2 \\ J_2 & 0\esm$.
Instead, we might as well use
$V = \bsm -1 & 0 & 1\\ 0 & 1 & 0\esm$ 
and 
$\Omega = \bsm 
0 & 0 & 1 \\
0 & 0 & 0\\
1 & 0 & 0\esm$.

It is important to note that while this reduces $m$ and therefore improves the running time for solving $(R2[t])$ and $(R3[t])$, it also has a negative impact on the quality of the solutions if $P_1$ and $P_2$ are each assumed to contain a different local optimizer $x_j$.

In particular, the new formulation gives a ``discount'' on using the common vertex $\bsm 0 \\ 1 \esm$ so that more weight is assigned to $\bsm -1 \\ 0 \esm$ and $\bsm -1 \\ 0 \esm$. Consequently, the individual $\lambda_i$ will have a larger spread, as can be seen by comparing Figures~\ref{fig:euclidean_choice} and \ref{fig:spread}.
This is because the reduced formulation only needs to increase one entry of $\lambda_*$ to use $\bsm 0 \\ 1 \esm$ in convex combinations of \emph{both} $P_1$ and $P_2$, while one entry for \emph{each} $P_1$ and $P_2$ had to be increased in the original formulation for the same effect.

Note that this is desirable if the union $P_1 \cup P_2$ is meant to only contain a single local optimizer $x_j$.
\end{example}

\subsection{$\mc{P}$ as $\sigma$-Skeleton of Arbitrary Polytopes}

In order to suppress certain combinations of vertices of $\mc{P}$ to appear simultaneously in a parametrization, $\Omega$ can be extended to sum up the corresponding moments as well. Conversely, we can start with an arbitrary polytope $P$ and remove all faces whose dimension exceeds $\sigma\in \N$ to describe the $\sigma$-skeleton of $P$.

\begin{definition}
Let $\mc{P}$ be a single polytope, $\mc{A}_\mc{P}$ the adjacency matrix of the graph of $\mc{P}$ and $\Omega=J_m-I_m-\mc{A}_{P}$, so that $\Omega$ encodes all pairs of vertices whose connecting line segment passes through the interior of $\mc{P}$. Then the $\sigma$-skeleton $skel_\sigma(\mc{P})$ of $\mc{P}$ can be formally defined as

\begin{equation}
skel_\sigma(\mc{P}) := \{ x=V\lambda \colon \lambda\in \Delta^m_\Omega,\quad \|\lambda\|_0\leq \sigma+1 \},
\end{equation}
which is the union of all faces of $\mc{P}$ of dimension at most $\sigma$.
\end{definition}

\begin{remark}
The set $\{\lambda\in \Delta^m\colon \|\lambda \|_0 \leq \sigma\}$ can be described by adding the equation
\begin{equation}
\sum_{S\subseteq [m]\colon |S|\geq \sigma+1} y_S =0
\end{equation}
which can be incorporated into the equation given by $\Omega$. Of course, this will become quickly impractical since it requires $t>\sigma$ in $(R2[t])$ or $(R3[t])$ to work.
\end{remark}

\begin{example}
The unit square $C_2$ is given as the convex hull of $V=\bsm 0 & 1 & 0 & 1\\ 0 & 0 & 1 & 1\esm$ and $skel_1(C_2)$ consists of $4$ line segments. Choosing $\mc{P}=skel_1(C_2)$ we need $4$ simplices and consequently $m=8$ vertices for the approach in section \ref{sec:k-clustering}, but adding the sparsity constraint $\|\lambda\|_0\leq 2$ in $(R2[2])$ or $(R3[2])$ allows us to use each vertex only once to end up with $m=4$.
\end{example}

It would be interesting to investigate low-degree polynomials as approximations of sparsity constraints.

\subsection{Semialgebraic $K$}\label{sec:semialgebraic_K}

While the preceding sections worked on constraints regarding the parametrization, polynomial constraints on the local optimizers $\{x_j\}_{j\in[k]}$ can be incorporated into the framework laid out in section \ref{sec:notation} as well. In fact, for $x=V\lambda$ any polynomial expression $f(x)$ can be easily turned into a polynomial expression in $\lambda$ \emph{of the same degree} by setting $f'(\lambda)=f(V\lambda)$. So, in general, the feasible space $K$ can be assumed as a compact basic semialgebraic set - one only needs to cover this set by polytopes to use our approach. In particular, there is no need to \emph{approximate} $K$ by polytopes as long as $K$ is \emph{covered} by them.

However, depending on the geometry of the underlying set, it may be harder to choose a separating triangulation $\mc{P}$.

\begin{example}
Assume each local optimizer $x_j\in\R^d$, $j\in[k]$ should be normalized by $\|x_j\|_2=1$. Squaring this condition gives the quadratic polynomial equation $x_j^\T x_j=1$. Now substituting $x_j=V\lambda^j$ yields again a quadratic constraint ${\lambda^j}^\T \big(V^\T V\big) \lambda^j=1$.
\end{example}

\begin{example}
Assume each local optimizer $x_j\in\R^4$, $j\in[k]$ should encode a vectorized orthogonal $2\times 2$ matrix $X_j$. This yields four quadratic equations, one for each entry of $X_j X_j^\T= I_2$. Denoting them by 
$x_j^\T Q_l x_j = q_l$, substituting $x_j=V\lambda^j$ yields again quadratic constraints ${\lambda^j}^\T \big(V^\T Q_l V\big) \lambda^j=q_l$.
\end{example}

As a caveat, however, we point out that even though the hierarchy $(R2[t])$ will converge towards feasibility in the actual sets, lower levels may only give crude approximations.

\subsection{Clustering Varieties}

As already mentioned in the introduction, our approach can be easily extended from affine subspaces to the case of varieties. We simply replace $A_i x_j - b_i$ in \eqref{eq:ProblemFormulation} with $F_i(x_j)$, where $F_i \in \R[x]$ is an arbitrary multivariate polynomial and encodes the variety $\var2( \|F_i\|^2_2 )$. Following Section \ref{sec:k-clustering}, we may replace $x_j$ by $V\lambda_j$ and homogenize $\| F_i(V\lambda_j)\|^2_2$ using $\la \lambda_j, e\ra = 1$ to end up with a variant of (R2) where the objective function has been replaced. The results from Section \ref{sec:relaxation} follow according to this replacement, with the additional constraint that we can only consider $(R2)[t]$ or $(R3)[t]$ for values of $t\geq \max_{i\in[n]}\deg(F_i)$. Of course, we can still use our rounding heuristic presented in section \ref{sec:rounding}.

\subsection{Regularization with respect to $k$}

It should be noted that our relaxation (R2) never explicitly depends on the number $k$ apart from the constraint 
\begin{equation}
\la \lambda_{*},e\ra =  k
\end{equation}
in \eqref{eq:ConstraintReformulation}. It is therefore possible to treat $k$ throughout as a variable and include a weighted $k$ in the objective function in order to dynamically search for the number of clusters.

\section{Experiments}\label{sec:experiments}

All the examples in this section were carried out in Matlab using the SDPT3 package \cite{TTT96,TTT03}. 

\subsection{Euclidean Clustering}
By choosing $A_i=I$ in \eqref{eq:ProblemFormulation} we recover the classical problem of Euclidean clustering for the points $\{b_i\}_{i\in[n]}\subseteq \R^d$. For this problem, it is well known that 
\begin{equation}
\mc{P}\supseteq \conv(\{b_i\}_{i\in [n]})
\end{equation}
will contain all optimal parameters \cite{KMeans-SDP-07}. In particular, for the triangulation assumption it suffices that $\mc{P}$ covers a box which includes all $\{b_i\}_{i\in [n]}$, which can be easily extracted.\\

We can use Euclidean clustering to get a better intuition of how the relaxation works. Regarding the choice of $\mc{P}$, consider Figure \ref{fig:euclidean_choice}. Using any simplex containing all the points is the coarsest approximation but yields useless results, since each local estimate $V\lambda_i$ can be chosen as $b_i$. 

In view of corollary \ref{cor:R2equivalence}, the algorithm will perform best if the triangulation is separating, so that the cluster centers are separated by the polytopes in $\mc{P}$. This suggests that there should be at least $k$ polytopes, and that an oversegmentation removes the need knowing the ground truth, as can be observed in Figure \ref{fig:euclidean_choice}.


\begin{figure}[!h]
\begin{tabular}{c c c c}

\includegraphics[width=0.3\textwidth]{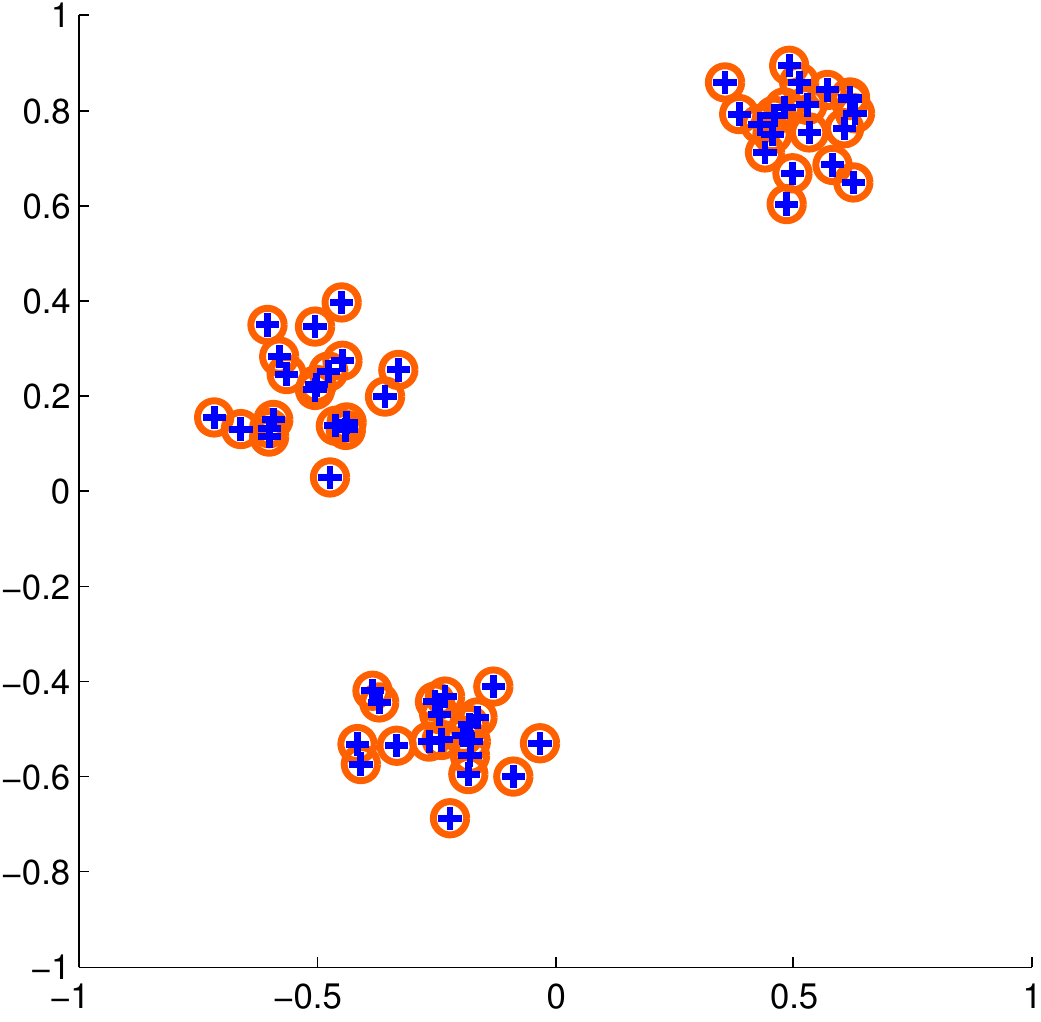} 
&
\includegraphics[width=0.3\textwidth]{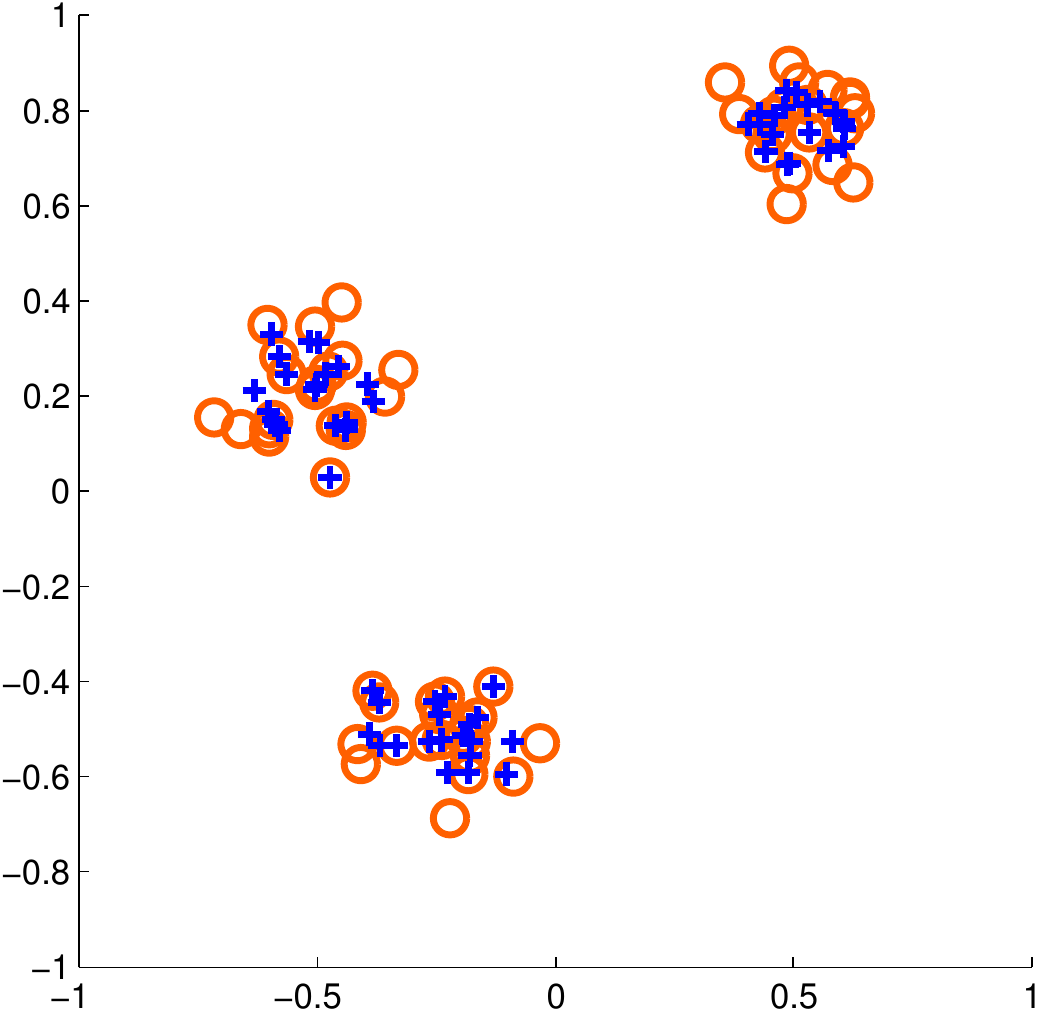}
&
\includegraphics[width=0.3\textwidth]{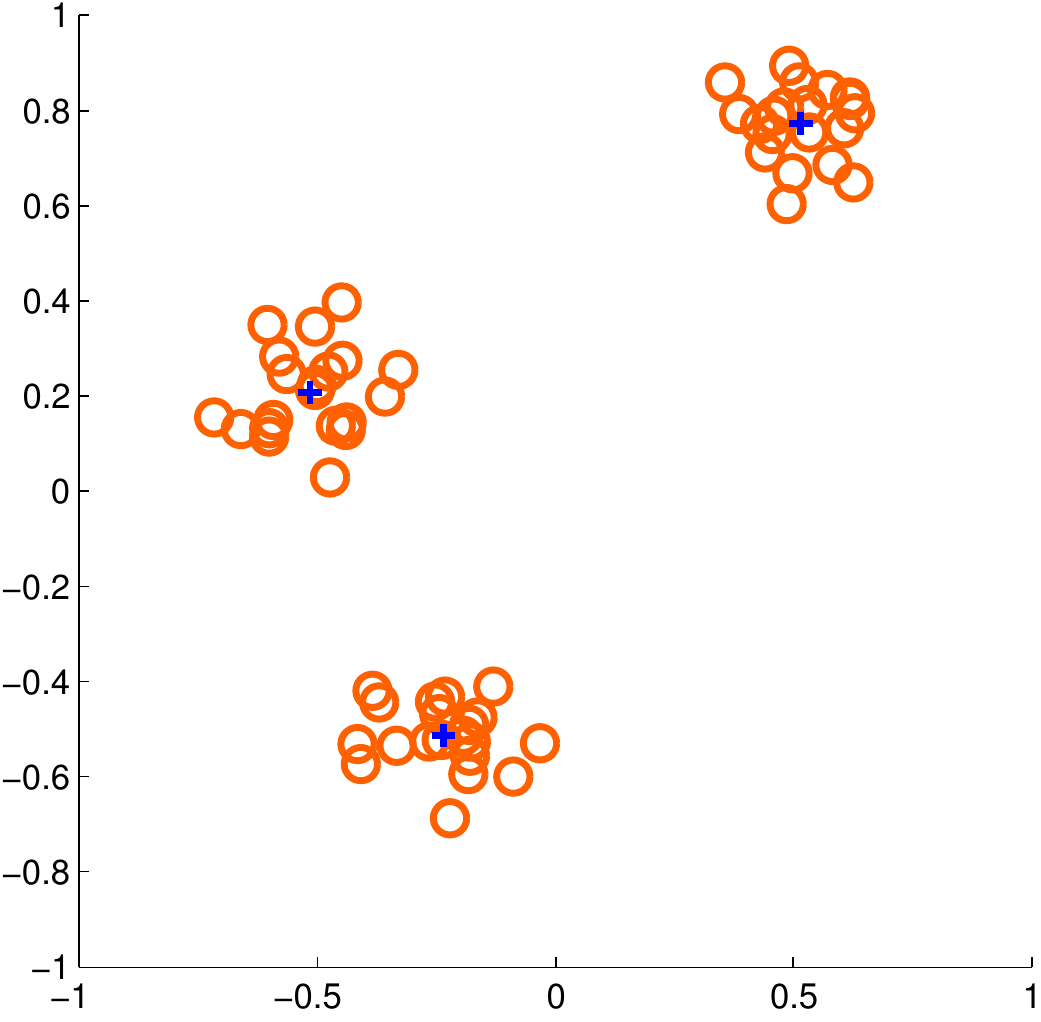} \\

\begin{tikzpicture}[scale=0.5]
\draw [thick] (-0.95 ,1) -- (-1, 1) -- (-1,-1) -- (1,-1) -- (1,-0.95);
\node [below] at ( -1 , -1) {-1};
\node [below] at (  1 , -1) { 1};
\node [left]  at ( -1 , -1) {-1};
\node [left]  at ( -1 ,  1) { 1};

\draw [orange, fill=orange,fill opacity=0.3] (-1.5 , 0) -- (0 , -1.5) -- (1.5 , 1.5) -- (-1.5 , 0);

\draw [fill,black] ( 0.5  ,  0.75 ) circle [radius=0.1];
\draw [fill,black] (-0.25 , -0.5  ) circle [radius=0.1];
\draw [fill,black] (-0.5  ,  0.25 ) circle [radius=0.1];
\end{tikzpicture}

&

\begin{tikzpicture}[scale=0.5]
\draw [thick] (-0.95 ,1) -- (-1, 1) -- (-1,-1) -- (1,-1) -- (1,-0.95);
\node [below] at ( -1 , -1) {-1};
\node [below] at (  1 , -1) { 1};
\node [left]  at ( -1 , -1) {-1};
\node [left]  at ( -1 ,  1) { 1};

\draw [orange, fill=orange,fill opacity=0.3] (-1 , 1) -- (-1, -1) -- (1 , -1) -- (-1, 1);
\draw [orange, fill=orange,fill opacity=0.3] (-1 , 1) -- ( 1,  1) -- (1 , -1) -- (-1, 1);

\draw [fill,black] ( 0.5  ,  0.75 ) circle [radius=0.1];
\draw [fill,black] (-0.25 , -0.5  ) circle [radius=0.1];
\draw [fill,black] (-0.5  ,  0.25 ) circle [radius=0.1];
\end{tikzpicture}

&

\begin{tikzpicture}[scale=0.5]
\draw [thick] (-0.95 ,1) -- (-1, 1) -- (-1,-1) -- (1,-1) -- (1,-0.95);
\node [below] at ( -1 , -1) {-1};
\node [below] at (  1 , -1) { 1};
\node [left]  at ( -1 , -1) {-1};
\node [left]  at ( -1 ,  1) { 1};

\draw [orange, fill=orange,fill opacity=0.3] (0 , 0) -- (-1, 0) -- (-1 ,  1) -- (0, 0);
\draw [orange, fill=orange,fill opacity=0.3] (0 , 0) -- (0, -1) -- (-1 , -1) -- (0, 0);
\draw [orange, fill=orange,fill opacity=0.3] (0 , 0) -- (0,  1) -- ( 1 ,  1) -- (0, 0);

\draw [fill,black] ( 0.5  ,  0.75 ) circle [radius=0.1];
\draw [fill,black] (-0.25 , -0.5  ) circle [radius=0.1];
\draw [fill,black] (-0.5  ,  0.25 ) circle [radius=0.1];
\end{tikzpicture}

\begin{tikzpicture}[scale=0.5]
\draw [thick] (-0.95 ,1) -- (-1, 1) -- (-1,-1) -- (1,-1) -- (1,-0.95);
\node [below] at ( -1 , -1) {-1};
\node [below] at (  1 , -1) { 1};
\node [left]  at ( -1 , -1) {-1};
\node [left]  at ( -1 ,  1) { 1};

\draw [orange, fill=orange,fill opacity=0.3] (0 , 0) -- (-1, 0) -- (-1 ,  1) -- (0, 0);
\draw [orange, fill=orange,fill opacity=0.3] (0 , 0) -- ( 0,-1) -- (-1 , -1) -- (0, 0);
\draw [orange, fill=orange,fill opacity=0.3] (0 , 0) -- ( 1, 0) -- ( 1 , -1) -- (0, 0);
\draw [orange, fill=orange,fill opacity=0.3] (0 , 0) -- ( 0, 1) -- ( 1 ,  1) -- (0, 0);

\draw [orange, fill=orange,fill opacity=0.3] (0 , 0) -- ( 0, 1) -- (-1 ,  1) -- (0, 0);
\draw [orange, fill=orange,fill opacity=0.3] (0 , 0) -- (-1, 0) -- (-1 , -1) -- (0, 0);
\draw [orange, fill=orange,fill opacity=0.3] (0 , 0) -- ( 0,-1) -- ( 1 , -1) -- (0, 0);
\draw [orange, fill=orange,fill opacity=0.3] (0 , 0) -- ( 1, 0) -- ( 1 ,  1) -- (0, 0);

\draw [fill,black] ( 0.5  ,  0.75 ) circle [radius=0.1];
\draw [fill,black] (-0.25 , -0.5  ) circle [radius=0.1];
\draw [fill,black] (-0.5  ,  0.25 ) circle [radius=0.1];
\end{tikzpicture}

\end{tabular}

\caption{
\textbf{Euclidean Clustering} with $d=2$, $k=3$, $n=60$.
Top: Circles corresponding to data points and crosses corresponding to local estimates for centers parametrized by $\lambda_i$ extracted from $(R3[1])$.
Bottom: Different Choices of $\mc{P}$.
From left to right: Minimal cover, nonseparating cover, perfect cover (based on ground truth), oversegmentation. The rounding procedure was able to recover the optimal solution implied by the right plot in each scenario.}
\label{fig:euclidean_choice}
\end{figure}


\begin{figure}[!h]
\centering
\begin{tabular}{c c c c}

\includegraphics[width=0.3\textwidth]{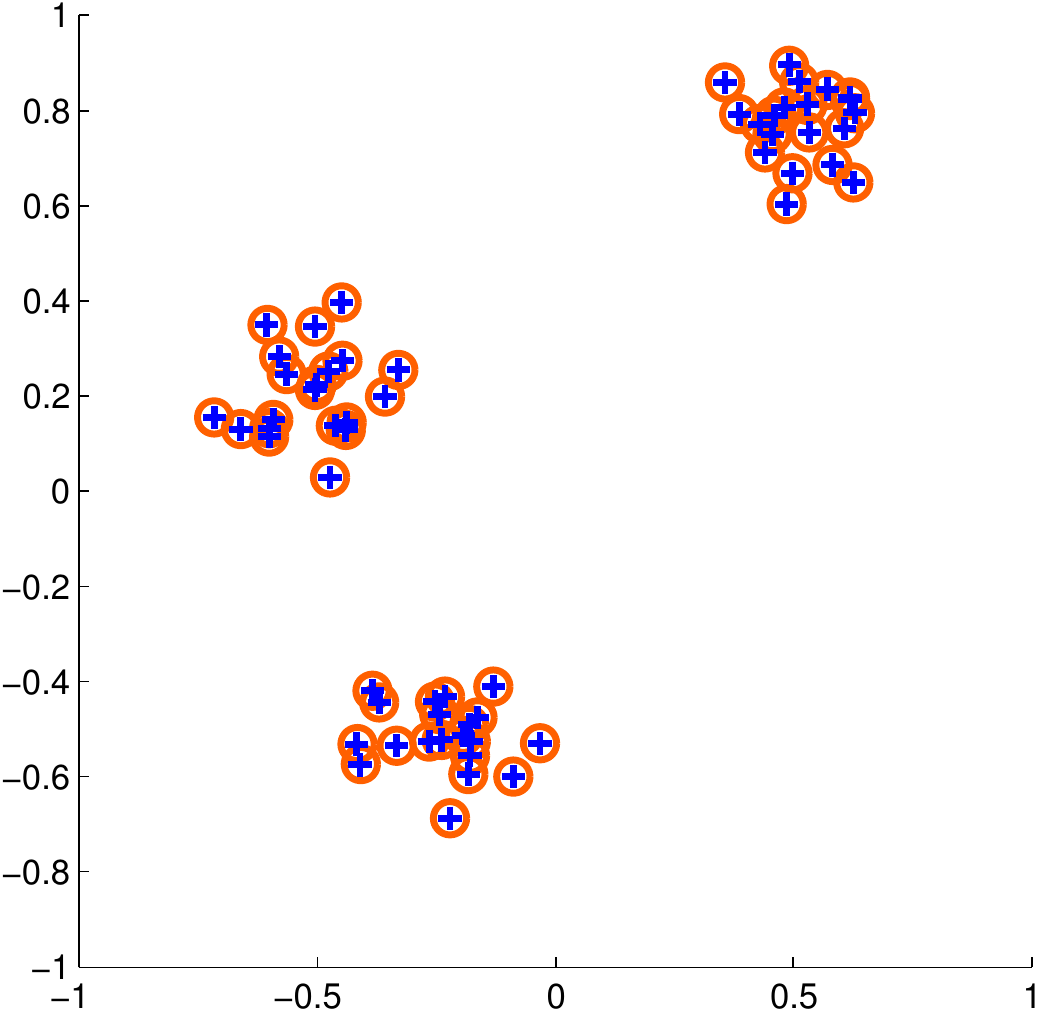}
&
\includegraphics[width=0.3\textwidth]{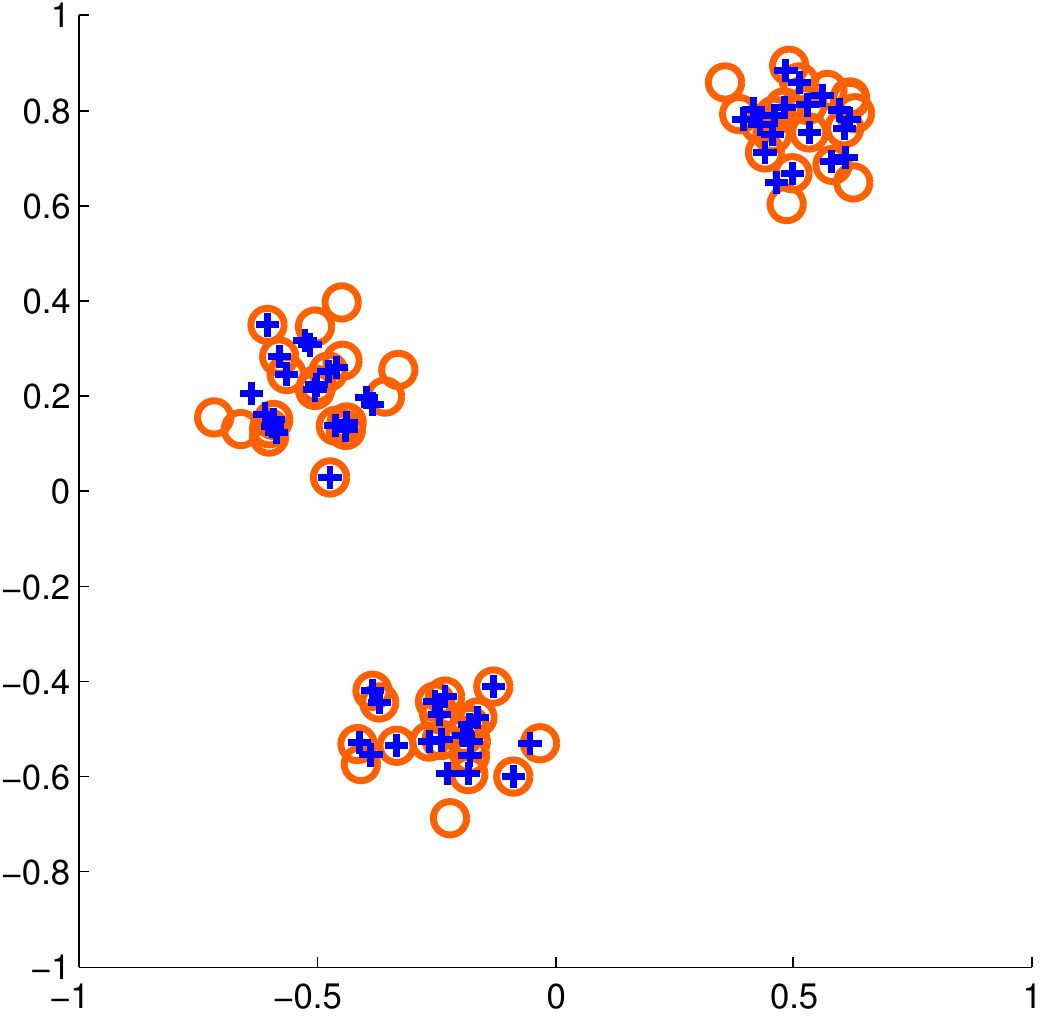}
&
\includegraphics[width=0.3\textwidth]{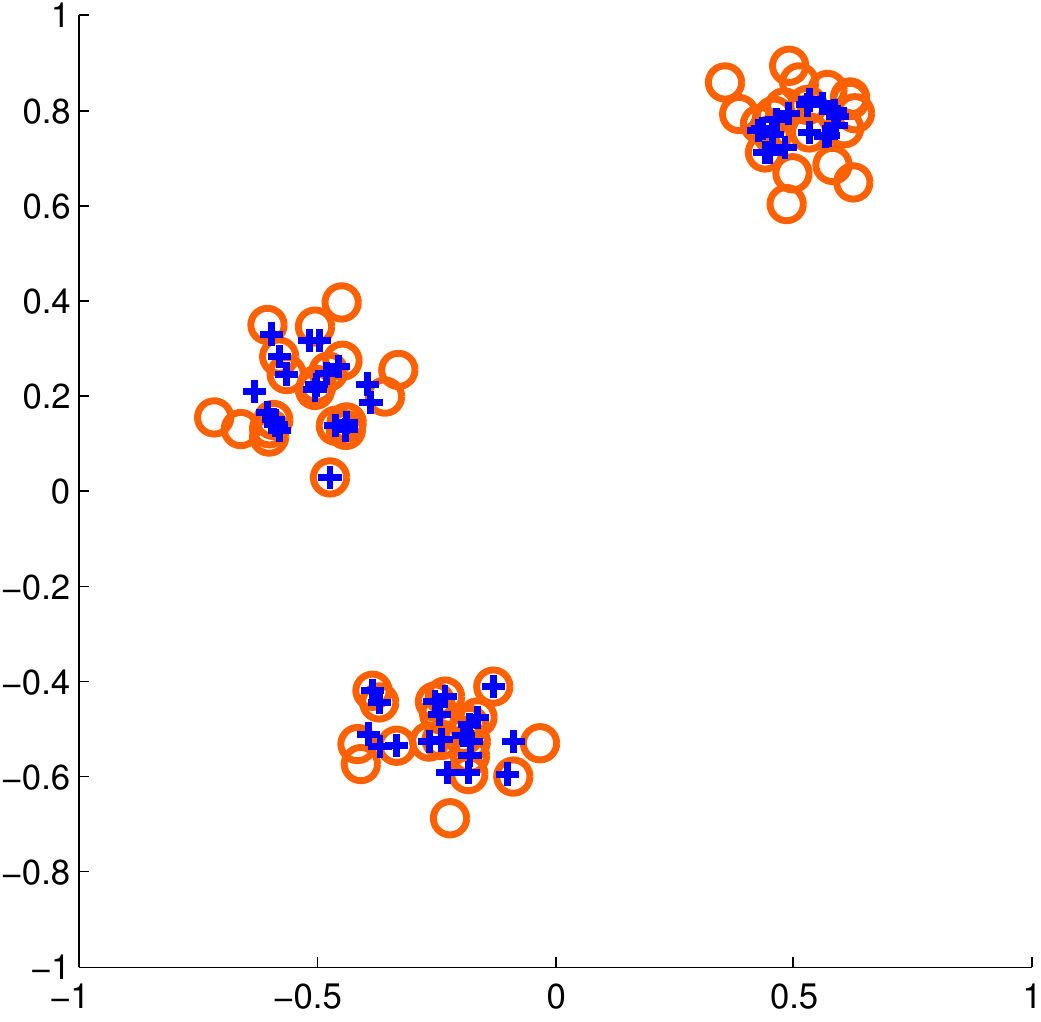} \\

\begin{tikzpicture}[scale=0.5]
\draw [thick] (-0.95 ,1) -- (-1, 1) -- (-1,-1) -- (1,-1) -- (1,-0.95);
\node [below] at ( -1 , -1) {-1};
\node [below] at (  1 , -1) { 1};
\node [left]  at ( -1 , -1) {-1};
\node [left]  at ( -1 ,  1) { 1};

\draw [orange, fill=orange,fill opacity=0.3] (-1 , 1) -- (-1, -1) -- (1 , -1) -- (1, 1) -- (-1 , 1);

\draw [fill,black] ( 0.5  ,  0.75 ) circle [radius=0.1];
\draw [fill,black] (-0.25 , -0.5  ) circle [radius=0.1];
\draw [fill,black] (-0.5  ,  0.25 ) circle [radius=0.1];
\end{tikzpicture}
&
\begin{tikzpicture}[scale=0.5]
\draw [thick] (-0.95 ,1) -- (-1, 1) -- (-1,-1) -- (1,-1) -- (1,-0.95);
\node [below] at ( -1 , -1) {-1};
\node [below] at (  1 , -1) { 1};
\node [left]  at ( -1 , -1) {-1};
\node [left]  at ( -1 ,  1) { 1};

\draw [orange, fill=orange,fill opacity=0.3] (-1 , 1) -- (-1, -1) -- (1 , -1) -- (1, 1) -- (-1 , 1);
\draw [orange, dashed, fill opacity=0.3] (-1 , 1) -- (1 , -1);

\draw [fill,black] ( 0.5  ,  0.75 ) circle [radius=0.1];
\draw [fill,black] (-0.25 , -0.5  ) circle [radius=0.1];
\draw [fill,black] (-0.5  ,  0.25 ) circle [radius=0.1];
\end{tikzpicture}
&
%
%
%
%

\begin{tikzpicture}[scale=0.5]
\draw [thick] (-0.95 ,1) -- (-1, 1) -- (-1,-1) -- (1,-1) -- (1,-0.95);
\node [below] at ( -1 , -1) {-1};
\node [below] at (  1 , -1) { 1};
\node [left]  at ( -1 , -1) {-1};
\node [left]  at ( -1 ,  1) { 1};

\draw [orange, fill=orange,fill opacity=0.3] (-1 , 1) -- (-1, -1) -- (1 , -1) -- (-1, 1);
\draw [orange, fill=orange,fill opacity=0.3] (-1 , 1) -- ( 1,  1) -- (1 , -1) -- (-1, 1);
\draw [orange, dashed, fill opacity=0.3] (0 , 0) -- (1 , 1);

\draw [fill,black] ( 0.5  ,  0.75 ) circle [radius=0.1];
\draw [fill,black] (-0.25 , -0.5  ) circle [radius=0.1];
\draw [fill,black] (-0.5  ,  0.25 ) circle [radius=0.1];
\end{tikzpicture}

\end{tabular}

\caption{\textbf{Euclidean Clustering} with $d=2$, $k=3$, $n=60$. Top: Circles corresponding to data points and crosses corresponding to local estimates for centers parametrized by $\lambda_i$ extracted from $(R3[1])$.
Bottom: Variants described in section \ref{sec:variants} for describing the feasible set $[-1,1]^2$.
From left to right: $\mc{P}$ is the union of $1,2,3$ polytopes respectively. Vertices at dashed lines are unique rows in $V$ and used in each bordering polytope.}
\label{fig:spread}
\end{figure}


Since we set up $\mc{P}$ as choice of arbitrary polytopes, we can easily restrict the feasible set in a way to force the optimal solution into specific regions. For example, by choosing each polytope in $\mc{P}$ to be a single vertex, we reduce $(R3[1])$ to an LP which aims to choose an optimal collection of locations from a discrete set of points, as can be seen in figure \ref{fig:facilities}. In this case, our experiments always returned the optimal solution.

\begin{figure}[!h]
\centering
\begin{tabular}{c c c}

\includegraphics[width=0.3\textwidth]{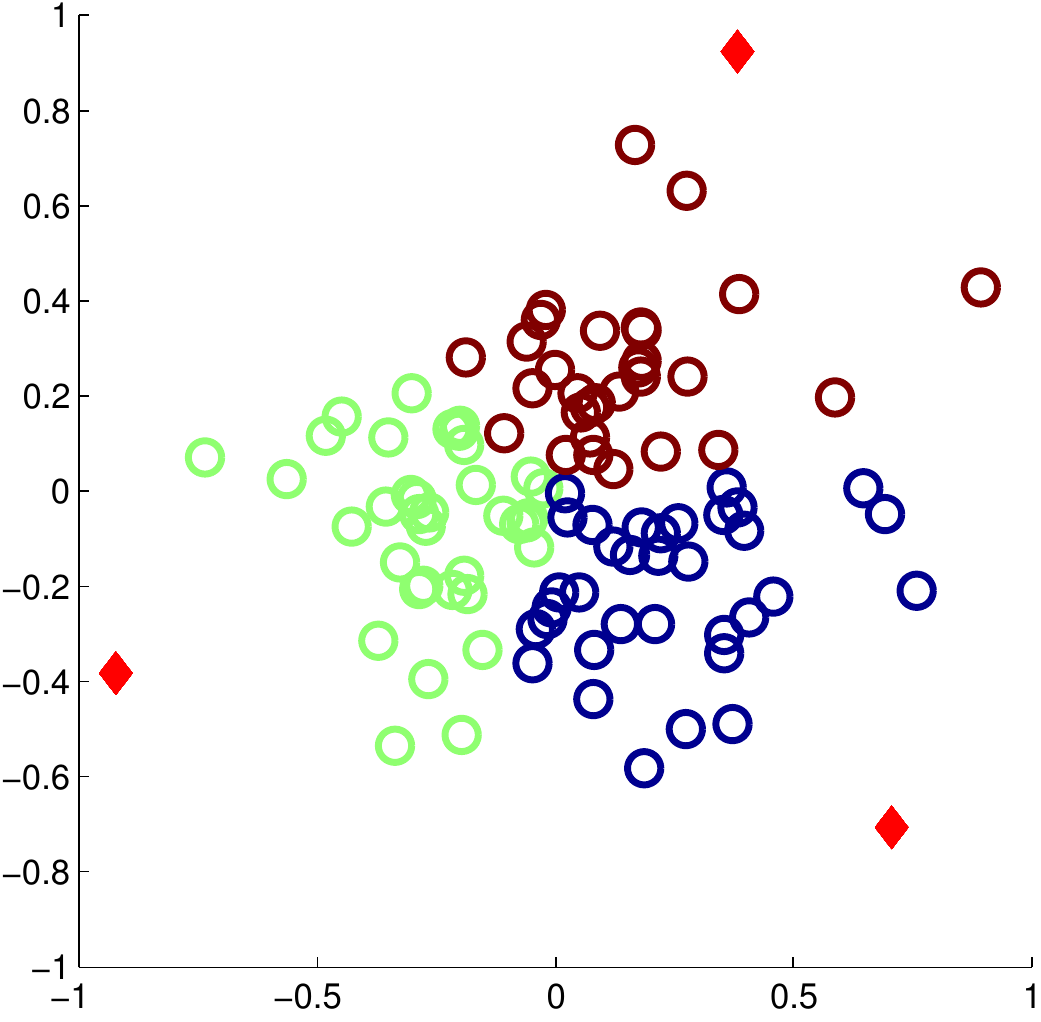}
&
\includegraphics[width=0.3\textwidth]{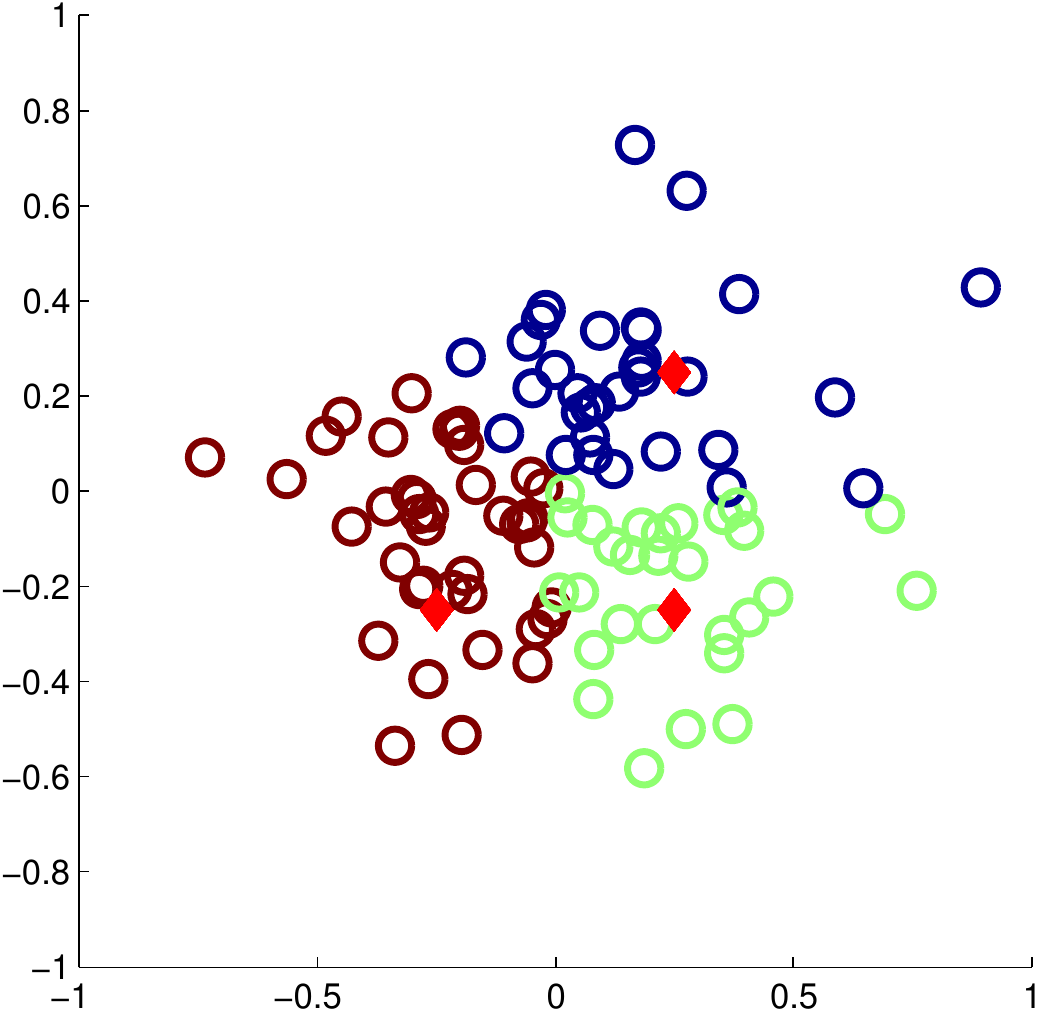}
&
\includegraphics[width=0.3\textwidth]{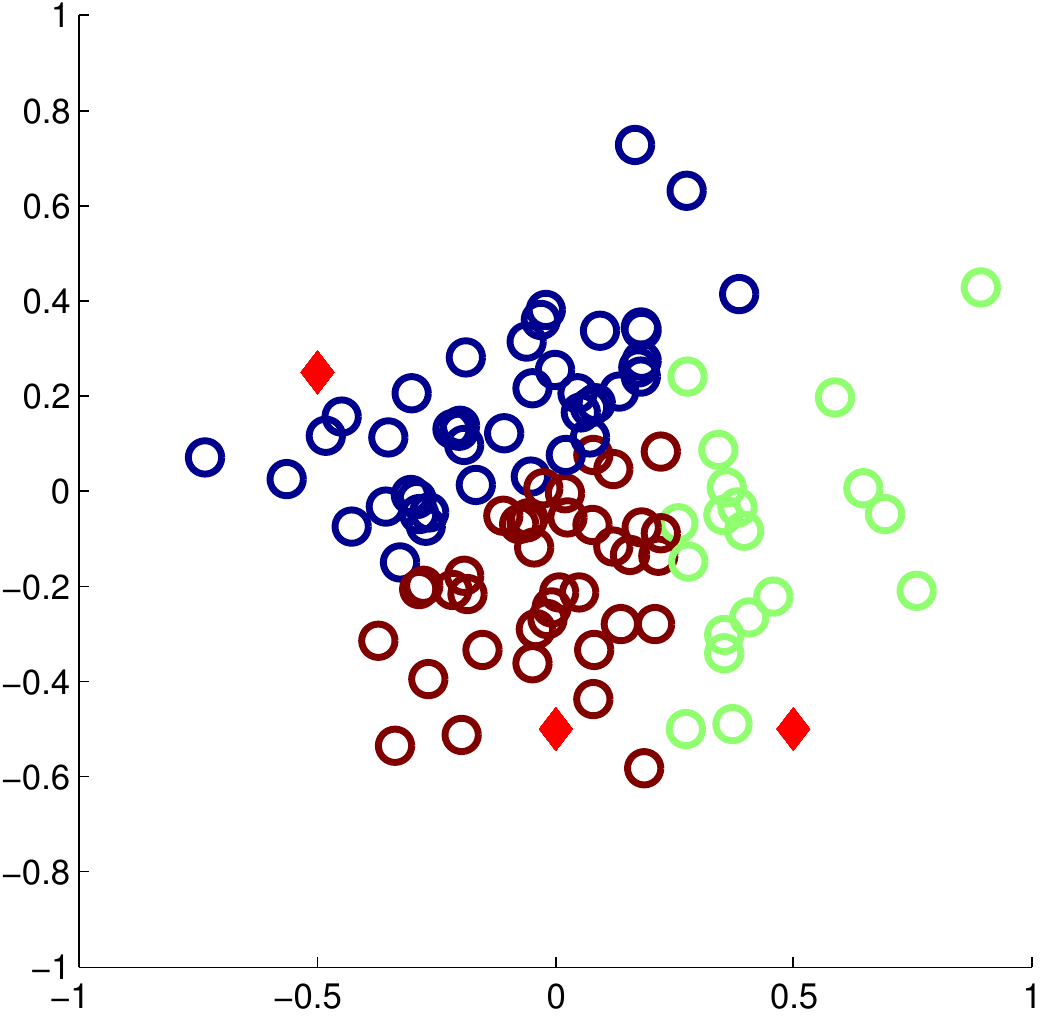}\\

\begin{tikzpicture}[scale=0.5]
\draw [thick] (-0.95 ,1) -- (-1, 1) -- (-1,-1) -- (1,-1) -- (1,-0.95);
\node [below] at ( -1 , -1) {-1};
\node [below] at (  1 , -1) { 1};
\node [left]  at ( -1 , -1) {-1};
\node [left]  at ( -1 ,  1) { 1};

\draw [orange, fill=orange, fill opacity=0.3] (    1.0000,         0) circle [radius=0.05];
\draw [orange, fill=orange, fill opacity=0.3] (    0.9239,    0.3827) circle [radius=0.05];
\draw [orange, fill=orange, fill opacity=0.3] (    0.7071,    0.7071) circle [radius=0.05];
\draw [orange, fill=orange, fill opacity=0.3] (    0.3827,    0.9239) circle [radius=0.05];
\draw [orange, fill=orange, fill opacity=0.3] (    0.0000,    1.0000) circle [radius=0.05];
\draw [orange, fill=orange, fill opacity=0.3] (   -0.3827,    0.9239) circle [radius=0.05];
\draw [orange, fill=orange, fill opacity=0.3] (   -0.7071,    0.7071) circle [radius=0.05];
\draw [orange, fill=orange, fill opacity=0.3] (   -0.9239,    0.3827) circle [radius=0.05];
\draw [orange, fill=orange, fill opacity=0.3] (   -1.0000,    0.0000) circle [radius=0.05];
\draw [orange, fill=orange, fill opacity=0.3] (   -0.9239,   -0.3827) circle [radius=0.05];
\draw [orange, fill=orange, fill opacity=0.3] (   -0.7071,   -0.7071) circle [radius=0.05];
\draw [orange, fill=orange, fill opacity=0.3] (   -0.3827,   -0.9239) circle [radius=0.05];
\draw [orange, fill=orange, fill opacity=0.3] (   -0.0000,   -1.0000) circle [radius=0.05];
\draw [orange, fill=orange, fill opacity=0.3] (    0.3827,   -0.9239) circle [radius=0.05];
\draw [orange, fill=orange, fill opacity=0.3] (    0.7071,   -0.7071) circle [radius=0.05];
\draw [orange, fill=orange, fill opacity=0.3] (    0.9239,   -0.3827) circle [radius=0.05];
\end{tikzpicture}

&

\begin{tikzpicture}[scale=0.5]
\draw [thick] (-0.95 ,1) -- (-1, 1) -- (-1,-1) -- (1,-1) -- (1,-0.95);
\node [below] at ( -1 , -1) {-1};
\node [below] at (  1 , -1) { 1};
\node [left]  at ( -1 , -1) {-1};
\node [left]  at ( -1 ,  1) { 1};

\draw [orange, fill=orange, fill opacity=0.3] (   -1.0000,   -1.0000) circle [radius=0.05];
\draw [orange, fill=orange, fill opacity=0.3] (   -1.0000,    1.0000) circle [radius=0.05];
\draw [orange, fill=orange, fill opacity=0.3] (   -0.7500,   -0.7500) circle [radius=0.05];
\draw [orange, fill=orange, fill opacity=0.3] (   -0.7500,    0.7500) circle [radius=0.05];
\draw [orange, fill=orange, fill opacity=0.3] (   -0.5000,   -0.5000) circle [radius=0.05];
\draw [orange, fill=orange, fill opacity=0.3] (   -0.5000,    0.5000) circle [radius=0.05];
\draw [orange, fill=orange, fill opacity=0.3] (   -0.2500,   -0.2500) circle [radius=0.05];
\draw [orange, fill=orange, fill opacity=0.3] (   -0.2500,    0.2500) circle [radius=0.05];
\draw [orange, fill=orange, fill opacity=0.3] (         0,         0) circle [radius=0.05];
\draw [orange, fill=orange, fill opacity=0.3] (    0.2500,    0.2500) circle [radius=0.05];
\draw [orange, fill=orange, fill opacity=0.3] (    0.2500,   -0.2500) circle [radius=0.05];
\draw [orange, fill=orange, fill opacity=0.3] (    0.5000,    0.5000) circle [radius=0.05];
\draw [orange, fill=orange, fill opacity=0.3] (    0.5000,   -0.5000) circle [radius=0.05];
\draw [orange, fill=orange, fill opacity=0.3] (    0.7500,    0.7500) circle [radius=0.05];
\draw [orange, fill=orange, fill opacity=0.3] (    0.7500,   -0.7500) circle [radius=0.05];
\draw [orange, fill=orange, fill opacity=0.3] (    1.0000,    1.0000) circle [radius=0.05];
\draw [orange, fill=orange, fill opacity=0.3] (    1.0000,   -1.0000) circle [radius=0.05];
\end{tikzpicture}
&

\begin{tikzpicture}[scale=0.5]
\draw [thick] (-0.95 ,1) -- (-1, 1) -- (-1,-1) -- (1,-1) -- (1,-0.95);
\node [below] at ( -1 , -1) {-1};
\node [below] at (  1 , -1) { 1};
\node [left]  at ( -1 , -1) {-1};
\node [left]  at ( -1 ,  1) { 1};

\draw [orange, fill=orange, fill opacity=0.3] (   -0.2500,   -0.5000) circle [radius=0.05];
\draw [orange, fill=orange, fill opacity=0.3] (         0,   -0.5000) circle [radius=0.05];
\draw [orange, fill=orange, fill opacity=0.3] (    0.2500,   -0.5000) circle [radius=0.05];
\draw [orange, fill=orange, fill opacity=0.3] (    0.5000,   -0.5000) circle [radius=0.05];
\draw [orange, fill=orange, fill opacity=0.3] (    0.7500,   -0.5000) circle [radius=0.05];
\draw [orange, fill=orange, fill opacity=0.3] (   -0.5000,   -0.5000) circle [radius=0.05];
\draw [orange, fill=orange, fill opacity=0.3] (   -0.5000,   -0.2500) circle [radius=0.05];
\draw [orange, fill=orange, fill opacity=0.3] (   -0.5000,         0) circle [radius=0.05];
\draw [orange, fill=orange, fill opacity=0.3] (   -0.5000,    0.2500) circle [radius=0.05];
\draw [orange, fill=orange, fill opacity=0.3] (   -0.5000,    0.5000) circle [radius=0.05];
\draw [orange, fill=orange, fill opacity=0.3] (   -0.5000,    0.7500) circle [radius=0.05];
\end{tikzpicture}

\end{tabular}

\caption{\textbf{Euclidean Clustering} with $d=2$, $k=3$, $n=100$ restricted to discrete $\mc{P}$.
Top: Circles corresponding to data points, diamonds corresponding to centers and colors corresponding to clusters. Bottom: Different choices of discrete $\mc{P}$.}
\label{fig:facilities}
\end{figure}


\subsection{Hyperplane Clustering}
By choosing $A_i=a_i$ as row vectors in $\R^d$ and setting $b_i=0$, \eqref{eq:ProblemFormulation} becomes the problem of choosing minimal $\la a_i,x_j\ra^2$ terms. We can interpret this as simultaneously choosing $k$ hyperplanes parameterized by their normal vectors $x_j$ and assigning the points $a_i$ to them according to their weighted angle.

We can uniquely parametrize these hyperplanes by choosing an element $x$ of their complement space which satisfies membership in both 
\begin{equation}
S^d_{\|\cdot \|}=\{x\in \R^d\colon  \|x\|=1\}
\end{equation}
in \emph{any} fixed norm $\|\cdot \|$ and the 'upper halfspace' 
\begin{equation}
H^d_+=\{x\in \R^d\colon x_1\geq 0\}.
\end{equation} 
Note that the norm will weight each point $x\in S^d_{\|\cdot \|}\cap H^d$ by $\|x\|$. In particular, even though any polyhedral approximation of $S^d_{\ell_2}\cap H^d_+$ corresponds to a norm and can be used as $\mc{P}$, this will introduce a slight bias. 

The application of this approach is illustrated by Figure \ref{fig:hyperplane_polygon}.


\begin{figure}[!h]

\centering
\begin{tabular}{c c c}

\includegraphics[width=0.3\textwidth]{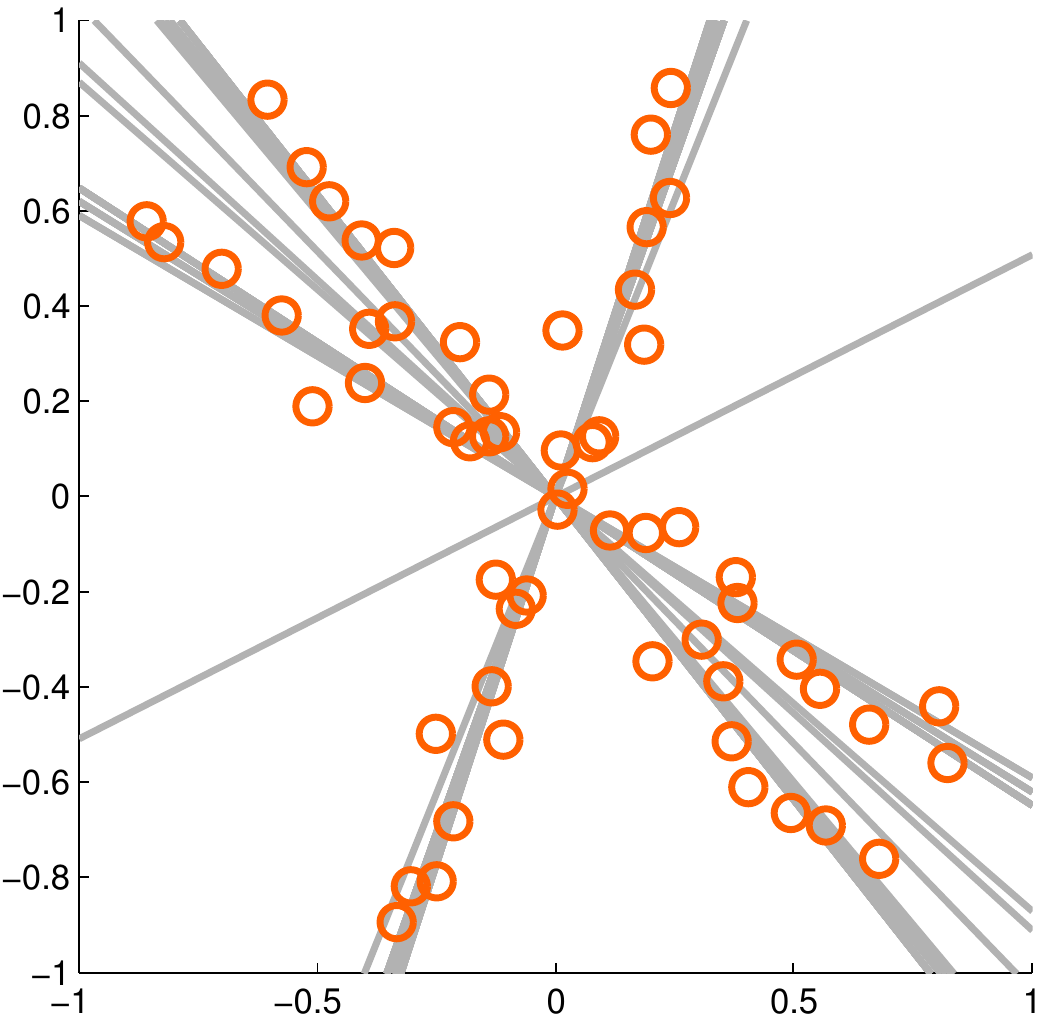}
&
\includegraphics[width=0.3\textwidth]{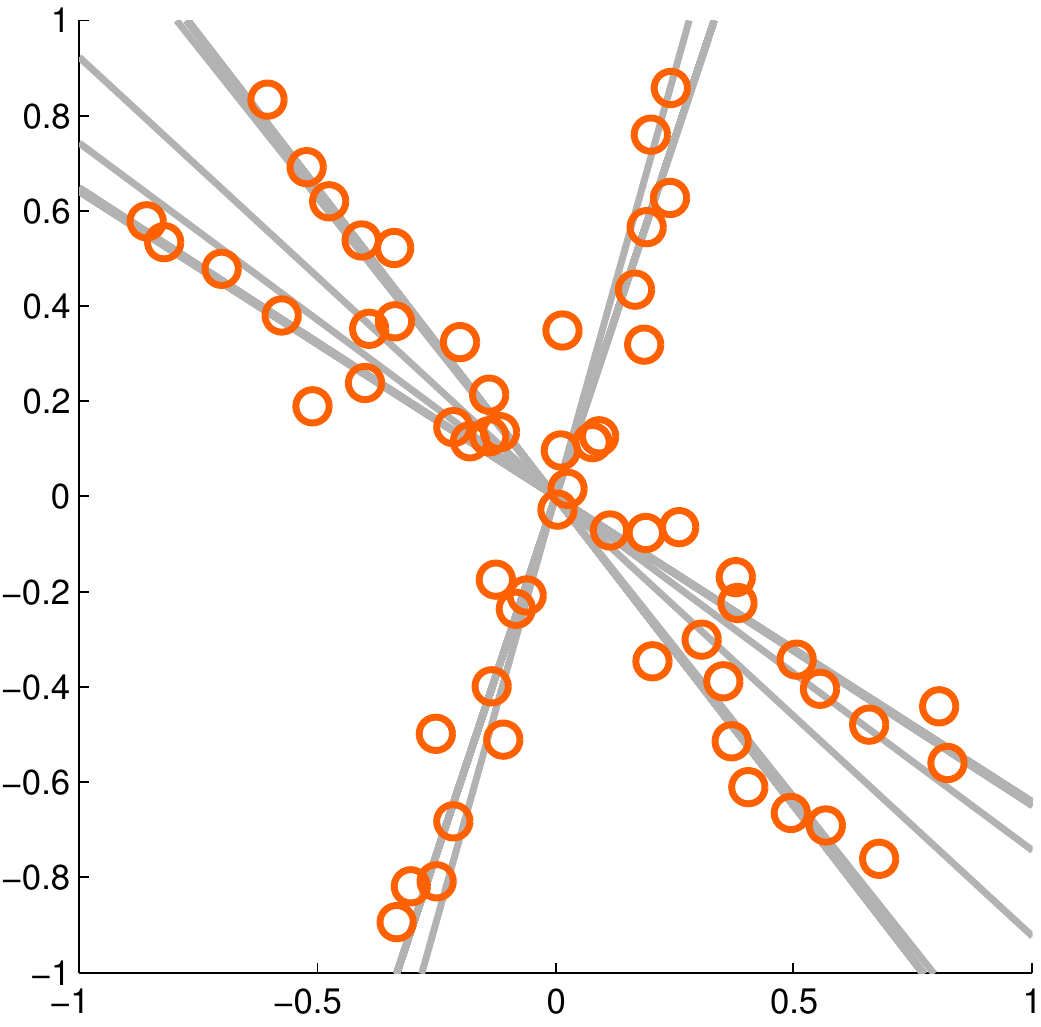}
&
\includegraphics[width=0.3\textwidth]{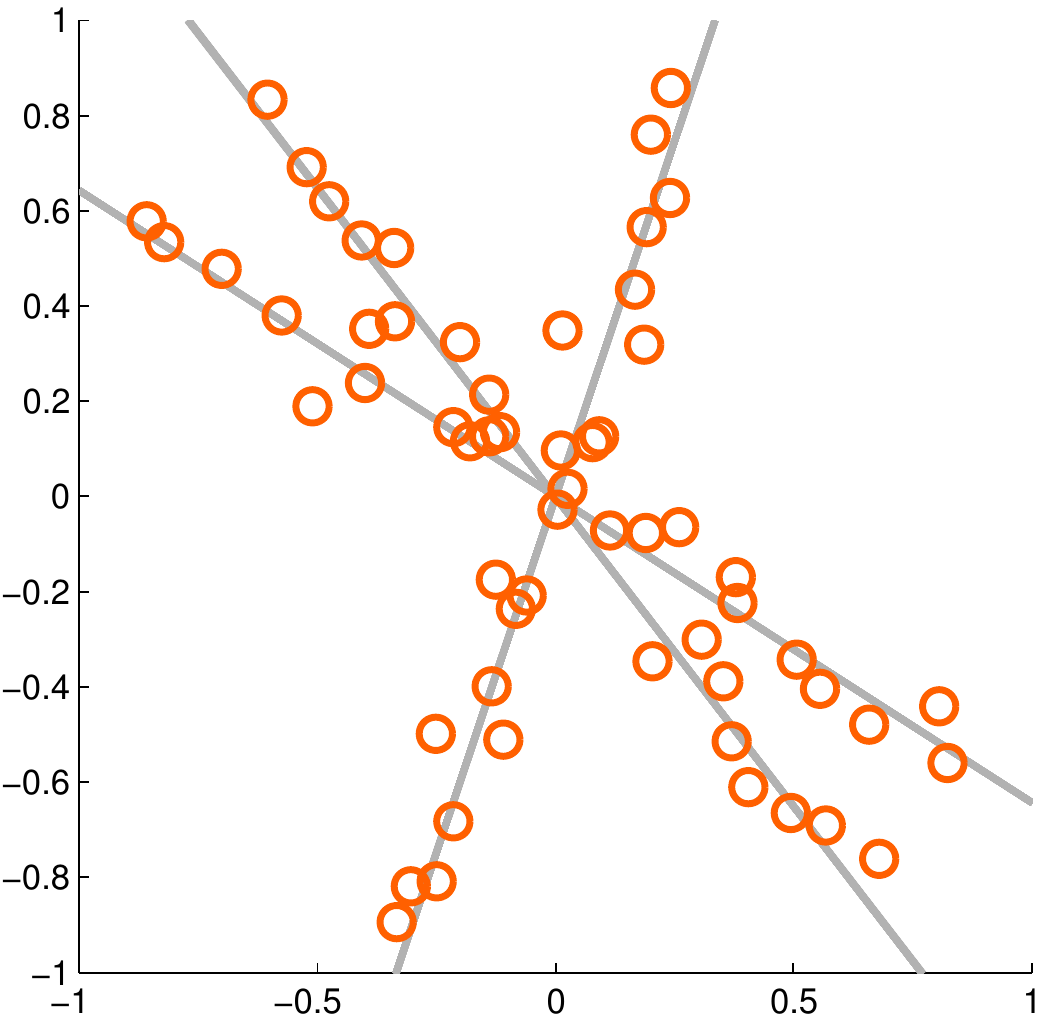}\\

\begin{tikzpicture}[scale=1.25]
\draw [thick,domain=0:180] plot ({cos(\x)}, {sin(\x)});
\draw [thick] (-1.05 ,0) -- (1.05, 0);
\node [below] at ( -1 , 0) {-1};
\node [below] at (  1 , 0) { 1};
\node [above] at (  0,  1) { 1};
\node [below] at (  0,  0) { 0};

\draw [thick, orange] (-1 , 0) -- (-0.71, 0.71) -- (0 , 1) -- (0.71, 0.71) -- (1 , 0);

\draw [orange, thick, densely dotted] (0 , 0) -- (-1, 0) -- (-0.7071, 0.7071) -- (0, 0);
\draw [orange, thick, densely dotted] (0 , 0) -- ( 0, 1) -- ( 0.7071, 0.7071) -- (0, 0);
\draw [orange, thick, densely dotted] (0 , 0) -- ( 1, 0);

\draw [fill,thick,blue,dashed] ( 0, 0) -- ( 0.79, 0.61) circle [radius=0.03];
\draw [fill,thick,blue,dashed] ( 0, 0) -- ( 0.54, 0.84) circle [radius=0.03];
\draw [fill,thick,blue,dashed] ( 0, 0) -- (-0.94, 0.33) circle [radius=0.03];
\end{tikzpicture}
&

\begin{tikzpicture}[scale=1.25]
\draw [thick,domain=0:180] plot ({cos(\x)}, {sin(\x)});
\draw [thick] (-1.05 ,0) -- (1.05, 0);
\node [below] at ( -1 , 0) {-1};
\node [below] at (  1 , 0) { 1};
\node [above] at (  0,  1) { 1};
\node [below] at (  0,  0) { 0};

\draw [thick, orange] 
 (-1, 0) --(-0.9239, 0.3827) -- (-0.7071, 0.7071) -- (-0.3827, 0.9239) --
 ( 0, 1) --( 0.3827, 0.9239) -- ( 0.7071, 0.7071) -- ( 0.9239, 0.3827) -- (1,0);

\draw [orange, thick, densely dotted] (0 , 0) -- (-1, 0) -- (-0.9239, 0.3827) -- (0, 0);
\draw [orange, thick, densely dotted] (0 , 0) -- (-0.7071, 0.7071) -- (-0.3827, 0.9239) -- (0, 0);
\draw [orange, thick, densely dotted] (0 , 0) -- ( 0, 1) -- (0.3827, 0.9239) -- (0, 0);
\draw [orange, thick, densely dotted] (0 , 0) -- ( 0.7071, 0.7071) -- ( 0.9239, 0.3827) -- (0, 0);
\draw [orange, thick, densely dotted] (0 , 0) -- ( 1, 0);

\draw [fill,thick,blue,dashed] ( 0, 0) -- ( 0.79, 0.61) circle [radius=0.03];
\draw [fill,thick,blue,dashed] ( 0, 0) -- ( 0.54, 0.84) circle [radius=0.03];
\draw [fill,thick,blue,dashed] ( 0, 0) -- (-0.94, 0.33) circle [radius=0.03];
\end{tikzpicture}

&

\begin{tikzpicture}[scale=1.25]
\draw [thick,domain=0:180] plot ({cos(\x)}, {sin(\x)});
\draw [thick] (-1.05 ,0) -- (1.05, 0);
\node [below] at ( -1 , 0) {-1};
\node [below] at (  1 , 0) { 1};
\node [above] at (  0,  1) { 1};
\node [below] at (  0,  0) { 0};

\draw [thick, orange] 
 ( 1, 0) --
 ( 0.9808, 0.1951) -- ( 0.9239, 0.3827) -- ( 0.8315, 0.5556) --
 ( 0.7071, 0.7071) --    
 ( 0.5556, 0.8315) -- ( 0.3827, 0.9239) -- ( 0.1951, 0.9808) --
 ( 0, 1) -- 
 (-0.1951, 0.9808) -- (-0.3827, 0.9239) -- (-0.5556, 0.8315) --
 (-0.7071, 0.7071) --     
 (-0.8315, 0.5556) -- (-0.9239, 0.3827) -- (-0.9808, 0.1951) --
 (-1, 0);

\draw [orange, thick, densely dotted] (0 , 0) -- (-1, 0) -- (-0.9808, 0.1951) -- (0, 0);
\draw [orange, thick, densely dotted] (0 , 0) -- (-0.9239, 0.3827) -- (-0.8315, 0.5556) -- (0, 0);
\draw [orange, thick, densely dotted] (0 , 0) -- (-0.7071, 0.7071) -- (-0.5556, 0.8315) -- (0, 0);
\draw [orange, thick, densely dotted] (0 , 0) -- (-0.1951, 0.9808) -- (-0.3827, 0.9239) -- (0, 0);
\draw [orange, thick, densely dotted] (0 , 0) -- ( 0, 1) -- ( 0.1951, 0.9808) -- (0, 0);
\draw [orange, thick, densely dotted] (0 , 0) -- ( 0.5556, 0.8315) -- ( 0.3827, 0.9239) -- (0, 0);
\draw [orange, thick, densely dotted] (0 , 0) -- ( 0.8315, 0.5556) -- ( 0.7071, 0.7071) -- (0, 0);
\draw [orange, thick, densely dotted] (0 , 0) -- ( 0.9808, 0.1951) -- ( 0.9239, 0.3827) -- (0, 0);
\draw [orange, thick, densely dotted] (0 , 0) -- ( 1, 0);

\draw [fill,thick,blue,dashed] ( 0, 0) -- ( 0.79, 0.61) circle [radius=0.03];
\draw [fill,thick,blue,dashed] ( 0, 0) -- ( 0.54, 0.84) circle [radius=0.03];
\draw [fill,thick,blue,dashed] ( 0, 0) -- (-0.94, 0.33) circle [radius=0.03];
\end{tikzpicture}

\end{tabular}

\caption{\textbf{Hyperplane Clustering} with  $d=2$, $k=3$, $n=60$. Top: Circles corresponding to data points and lines corresponding to local estimates for centers parametrized by $\lambda_i$ extracted from $(R3[1])$. 
Bottom: Approximations of $S^2_{\ell_2}\cap H^2_+$ by polygonal lines. For better visibility the ends of each line segment are connected to the origin with an dotted line. Dashed lines end in ground truth angles. }
\label{fig:hyperplane_polygon}

\end{figure}


As an application of Section \ref{sec:semialgebraic_K}, we can also work directly with $S^2_{\ell_2}\cap H^2_+$ by adding the quadratic constraint $x_j^\T x_j=1$ and choosing $\mc{P}\subseteq H^2_+$. Since $S^2_{\ell_2}$ is not polyhedral, we need to use 3-dimensional simplices for $\mc{P}$ in Figure~\ref{fig:hyperplane_smooth}  whereas 2-dimensional simplices sufficed for the polyhedral approximation shown by Figure~\ref{fig:hyperplane_polygon}.


\begin{figure}
\centering
\begin{tabular}{c c c}

\includegraphics[width=0.3\textwidth]{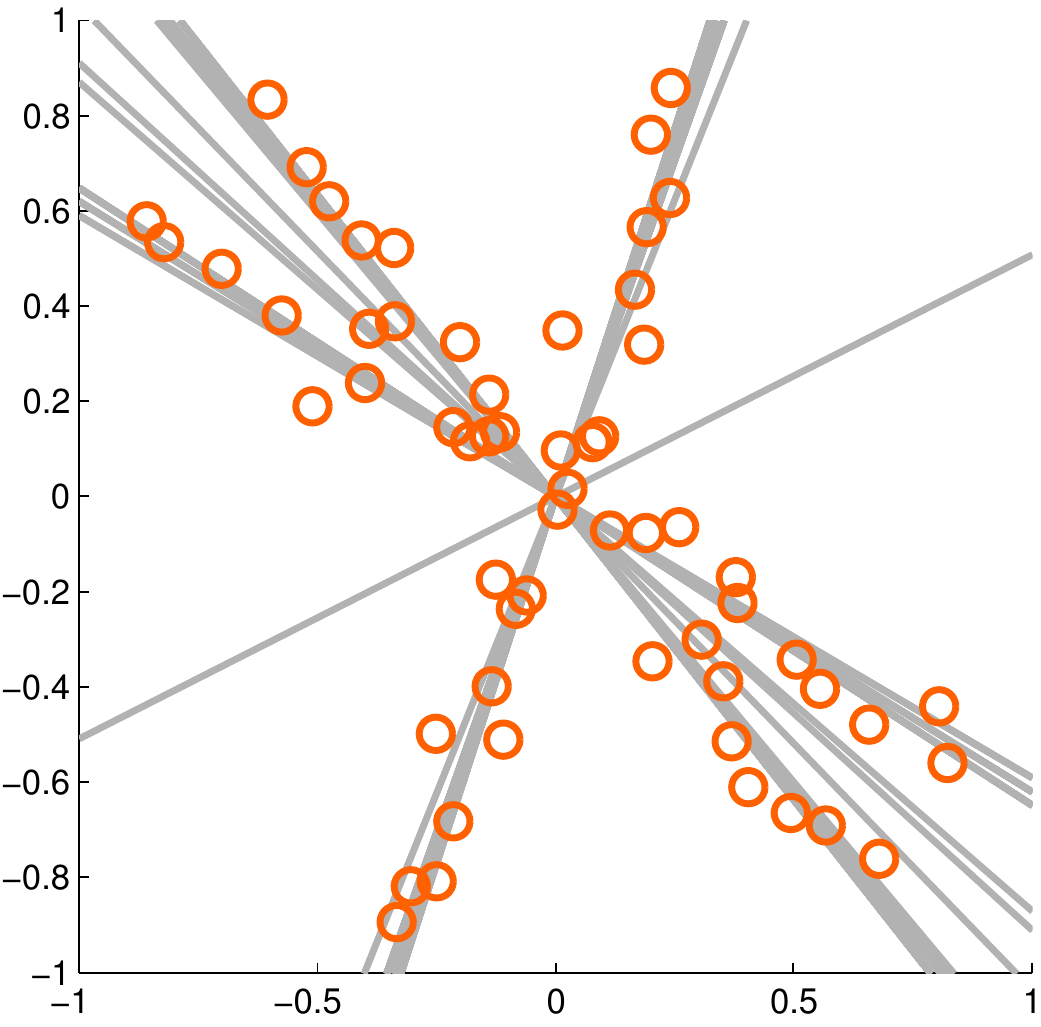}
&
\includegraphics[width=0.3\textwidth]{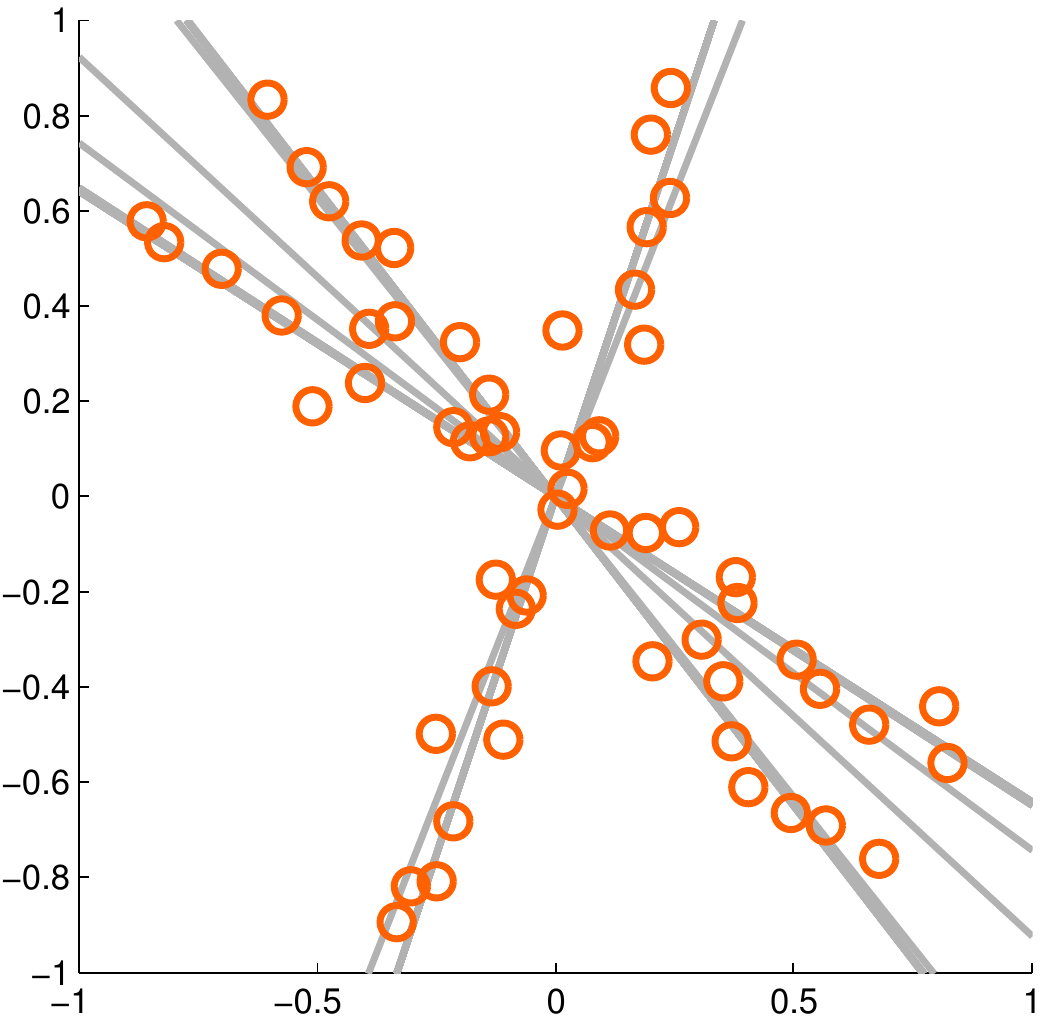}
&
\includegraphics[width=0.3\textwidth]{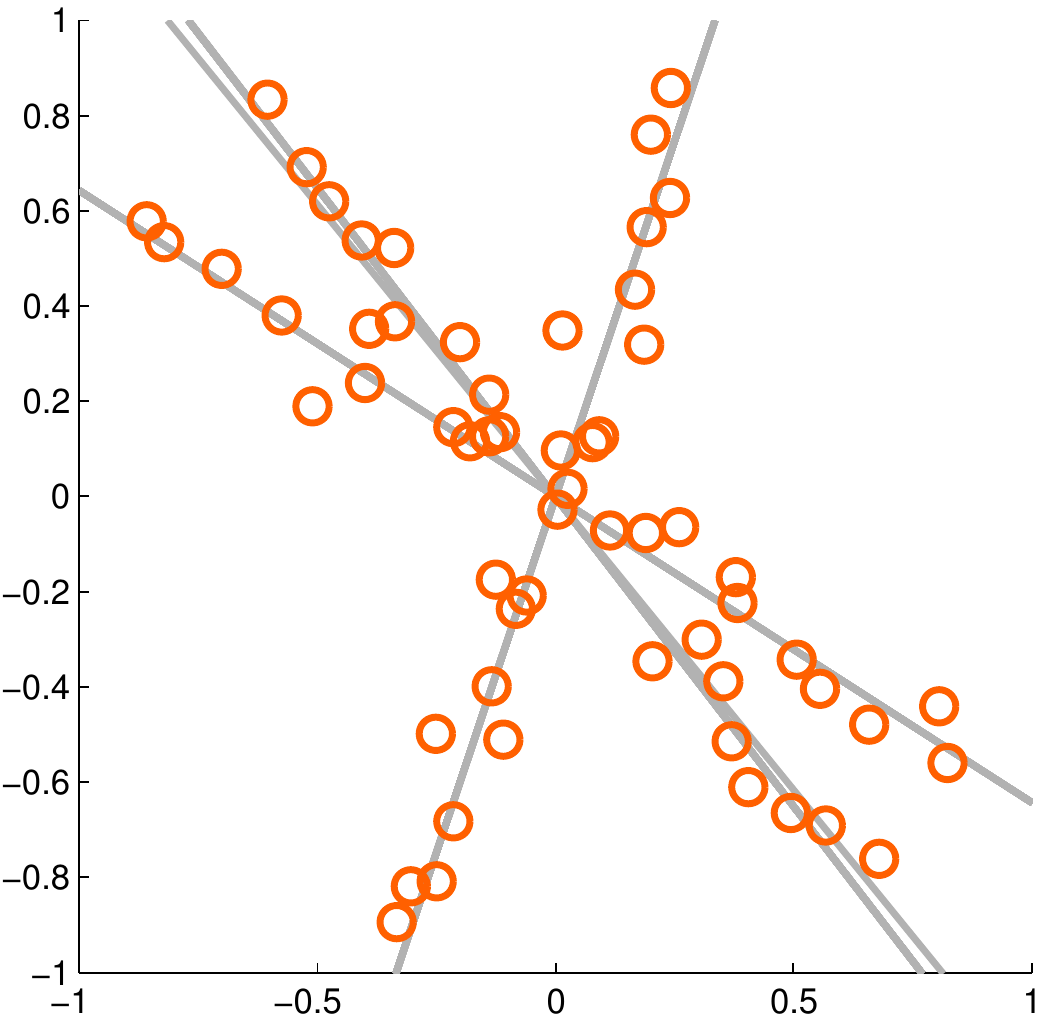}\\

\begin{tikzpicture}[scale=1.25]
\draw [thick] (-1.05 ,0) -- (1.05, 0);

\node [below] at ( -1 , 0) {-1};
\node [below] at (  1 , 0) { 1};
\node [below] at (  0,  0) { 0};

\draw [orange] (-1.0000, 0     ) -- (-0.7071, 0.7071) -- (-1.0243, 0.4243) -- (-1.0000, 0     );
\draw [orange] (-0.7071, 0.7071) -- ( 0     , 1.0000) -- (-0.4243, 1.0243) -- (-0.7071, 0.7071);
\draw [orange] (      0, 1.0000) -- ( 0.7071, 0.7071) -- ( 0.4243, 1.0243) -- (      0, 1.0000);
\draw [orange] ( 0.7071, 0.7071) -- ( 1.0000, 0     ) -- ( 1.0243, 0.4243) -- ( 0.7071, 0.7071);

\draw [orange,domain=0:180] plot ({cos(\x)}, {sin(\x)});

\draw [fill,thick,blue,dashed] ( 0, 0) -- ( 0.79, 0.61) circle [radius=0.03];
\draw [fill,thick,blue,dashed] ( 0, 0) -- ( 0.54, 0.84) circle [radius=0.03];
\draw [fill,thick,blue,dashed] ( 0, 0) -- (-0.94, 0.33) circle [radius=0.03];
\end{tikzpicture}
&

\begin{tikzpicture}[scale=1.25]
\draw [thick] (-1.05 ,0) -- (1.05, 0);

\node [below] at ( -1 , 0) {-1};
\node [below] at (  1 , 0) { 1};
\node [below] at (  0,  0) { 0};

\draw[orange]
   ( 1.0000,         0) -- (    0.9239,    0.3827) -- (    1.1543,    0.2296) -- (    1.0000,         0);
\draw[orange]
   ( 0.9239,    0.3827) -- (    0.7071,    0.7071) -- (    0.9786,    0.6539) -- (    0.9239,    0.3827);
\draw[orange]
   ( 0.7071,    0.7071) -- (    0.3827,    0.9239) -- (    0.6539,    0.9786) -- (    0.7071,    0.7071);
\draw[orange]
   ( 0.3827,    0.9239) -- (    0.0000,    1.0000) -- (    0.2296,    1.1543) -- (    0.3827,    0.9239);
\draw[orange]
   ( 0.0000,    1.0000) -- (   -0.3827,    0.9239) -- (   -0.2296,    1.1543) -- (    0.0000,    1.0000);
\draw[orange]
   (-0.3827,    0.9239) -- (   -0.7071,    0.7071) -- (   -0.6539,    0.9786) -- (   -0.3827,    0.9239);
\draw[orange]
   (-0.7071,    0.7071) -- (   -0.9239,    0.3827) -- (   -0.9786,    0.6539) -- (   -0.7071,    0.7071);
\draw[orange]
   (-0.9239,    0.3827) -- (   -1.0000,    0.0000) -- (   -1.1543,    0.2296) -- (   -0.9239,    0.3827);

\draw [orange,domain=0:180] plot ({cos(\x)}, {sin(\x)});

\draw [fill,thick,blue,dashed] ( 0, 0) -- ( 0.79, 0.61) circle [radius=0.03];
\draw [fill,thick,blue,dashed] ( 0, 0) -- ( 0.54, 0.84) circle [radius=0.03];
\draw [fill,thick,blue,dashed] ( 0, 0) -- (-0.94, 0.33) circle [radius=0.03];
\end{tikzpicture}

&

\begin{tikzpicture}[scale=1.25]
\draw [thick] (-1.05 ,0) -- (1.05, 0);

\node [below] at ( -1 , 0) {-1};
\node [below] at (  1 , 0) { 1};
\node [below] at (  0,  0) { 0};


\draw [orange] 
(    1.0000,         0) -- ( 0.9808,    0.1951) -- ( 1.1885,    0.1171) -- ( 1.0000,         0);
\draw [orange] 
(    0.9808,    0.1951) -- ( 0.9239,    0.3827) -- ( 1.1428,    0.3467) -- ( 0.9808,    0.1951);
\draw [orange] 
(    0.9239,    0.3827) -- ( 0.8315,    0.5556) -- ( 1.0532,    0.5630) -- ( 0.9239,    0.3827);
\draw [orange] 
(    0.8315,    0.5556) -- ( 0.7071,    0.7071) -- ( 0.9231,    0.7576) -- ( 0.8315,    0.5556);
\draw [orange] 
(    0.7071,    0.7071) -- ( 0.5556,    0.8315) -- ( 0.7576,    0.9231) -- ( 0.7071,    0.7071);
\draw [orange] 
(    0.5556,    0.8315) -- ( 0.3827,    0.9239) -- ( 0.5630,    1.0532) -- ( 0.5556,    0.8315);
\draw [orange] 
(    0.3827,    0.9239) -- ( 0.1951,    0.9808) -- ( 0.3467,    1.1428) -- ( 0.3827,    0.9239);
\draw [orange] 
(    0.1951,    0.9808) -- ( 0.0000,    1.0000) -- ( 0.1171,    1.1885) -- ( 0.1951,    0.9808);
\draw [orange] 
(    0.0000,    1.0000) -- (-0.1951,    0.9808) -- (-0.1171,    1.1885) -- ( 0.0000,    1.0000);
\draw [orange] 
(   -0.1951,    0.9808) -- (-0.3827,    0.9239) -- (-0.3467,    1.1428) -- (-0.1951,    0.9808);
\draw [orange] 
(   -0.3827,    0.9239) -- (-0.5556,    0.8315) -- (-0.5630,    1.0532) -- (-0.3827,    0.9239);
\draw [orange] 
(   -0.5556,    0.8315) -- (-0.7071,    0.7071) -- (-0.7576,    0.9231) -- (-0.5556,    0.8315);
\draw [orange] 
(   -0.7071,    0.7071) -- (-0.8315,    0.5556) -- (-0.9231,    0.7576) -- (-0.7071,    0.7071);
\draw [orange] 
(   -0.8315,    0.5556) -- (-0.9239,    0.3827) -- (-1.0532,    0.5630) -- (-0.8315,    0.5556);
\draw [orange] 
(   -0.9808,    0.1951) -- (-0.9239,    0.3827) -- (-1.1428,    0.3467) -- (-0.9808,    0.1951);
\draw [orange] 
(   -0.9808,    0.1951) -- (-1.0000,    0.0000) -- (-1.1885,    0.1171) -- (-0.9808,    0.1951);

\draw [orange, domain=0:180] plot ({cos(\x)}, {sin(\x)});

\draw [fill,thick,blue,dashed] ( 0, 0) -- ( 0.79, 0.61) circle [radius=0.03];
\draw [fill,thick,blue,dashed] ( 0, 0) -- ( 0.54, 0.84) circle [radius=0.03];
\draw [fill,thick,blue,dashed] ( 0, 0) -- (-0.94, 0.33) circle [radius=0.03];
\end{tikzpicture}

\end{tabular}

\caption{\textbf{Hyperplane Clustering} with  $d=2$, $k=3$, $n=60$. Top: Circles corresponding to data points and lines corresponding to local estimates for centers parametrized by $\lambda_i$ extracted from $(R3[1])$. 
Bottom: The semicircle $S^2_{\ell_2}\cap H^2_+$ is covered with triangles in $\mc{P}$. Dashed lines end in ground truth angles. }
\label{fig:hyperplane_smooth}

\end{figure}


\subsection{Affine Hyperplane Clustering}

We can easily extend the hyperplane clustering to the more general case of clustering affine hyperplanes by changing to homogeneous coordinates. In particular, we can encode the data point $a_i\in \R^d$ as $(a_i, 1)\in \R^{d+1}$ and try to find a hyperplane orthogonal to $(x_j, z_j)\in \R^d$ to get the minimization of terms like
\begin{equation}
\la (a_i, 1), (x_j, z_j)\ra^2 = (\la a_i,x_j\ra +z_j)^2,
\end{equation}
which approximate membership in the affine hyperplane 
\begin{equation}
a_i\in H_{(x_j,z_j)}=\{ a\in \R^d \colon \la a,x_j\ra = -z_j \}.
\end{equation}
Since the manipulation only amounts to lifting the input data $\{a_i\}_{i\in[n]}$, this is just an instance of the Hyperplane Clustering problem in a space with dimension increased by 1. In particular, the problem of Figure \ref{fig:hyperplane_affine} can be computed as an instance of clustering points from $\R^3$ into $2$-dimensional hyperplanes.


\begin{figure}[!h]
\centering
\begin{tabular}{c c c}

\includegraphics[width=0.3\textwidth]{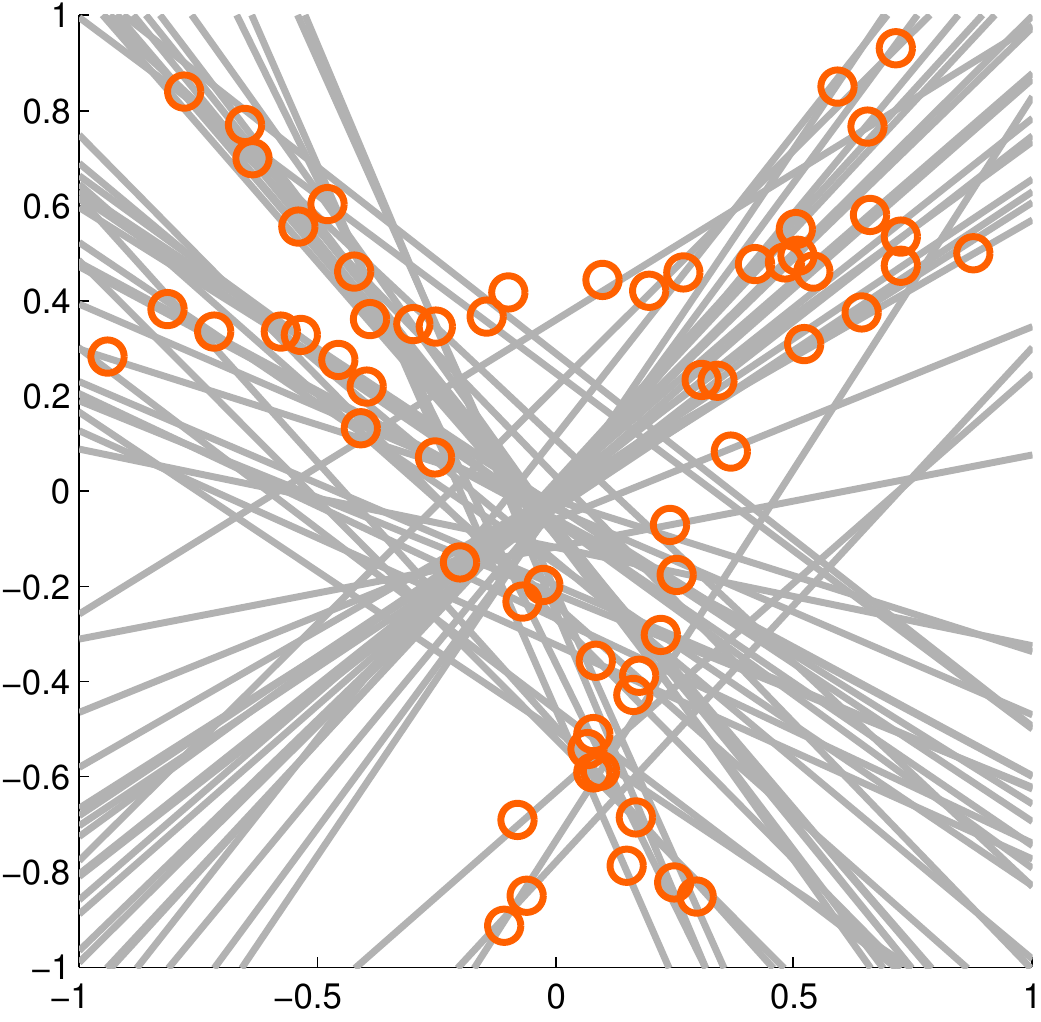}
&
\includegraphics[width=0.3\textwidth]{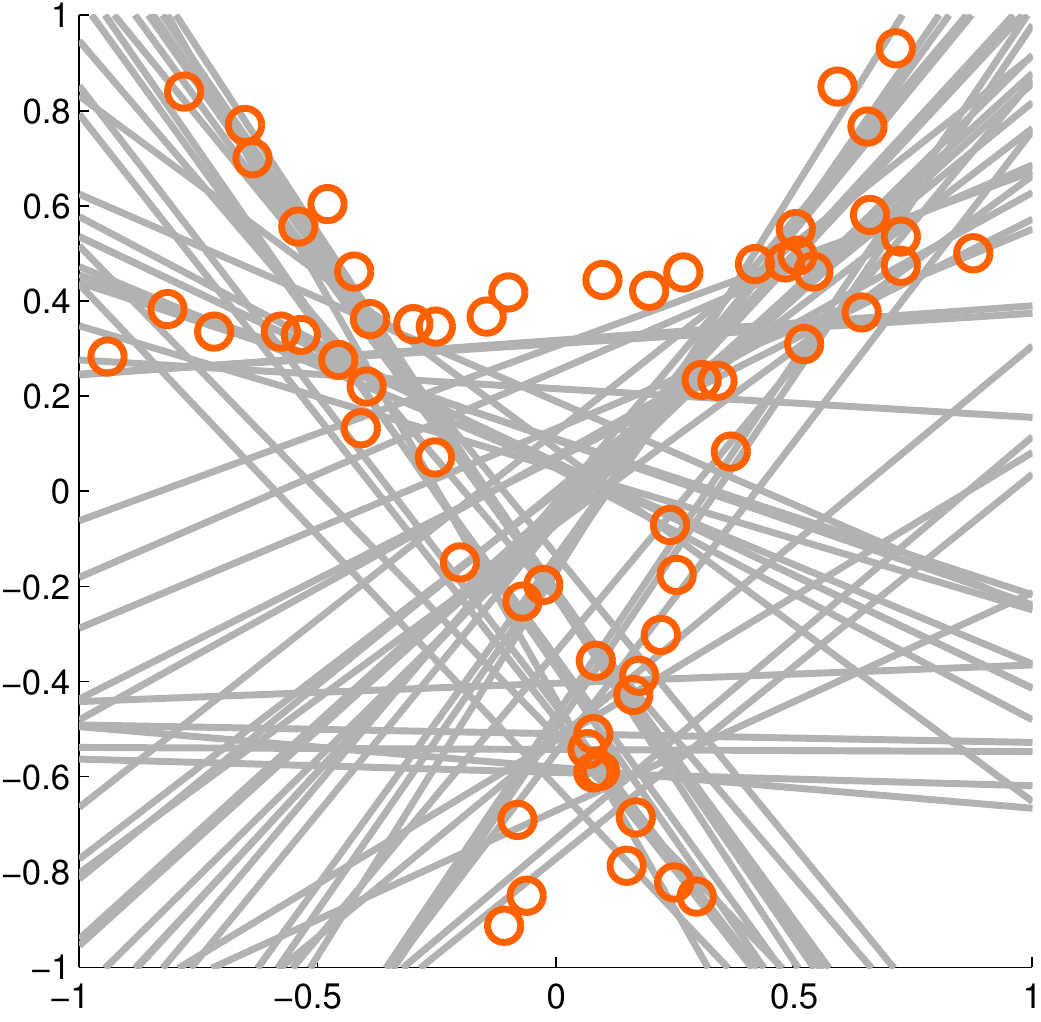}
&
\includegraphics[width=0.3\textwidth]{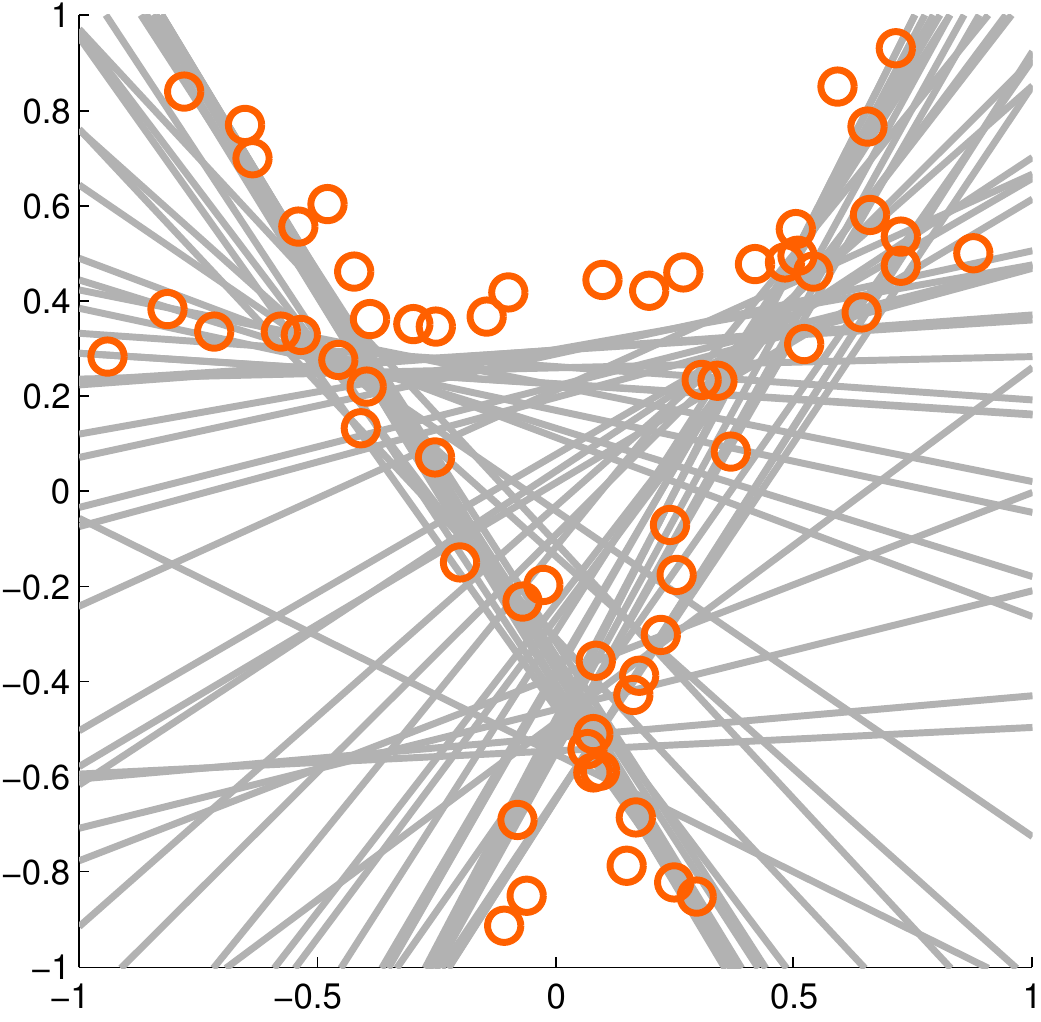}\\

\includegraphics[width=0.3\textwidth]{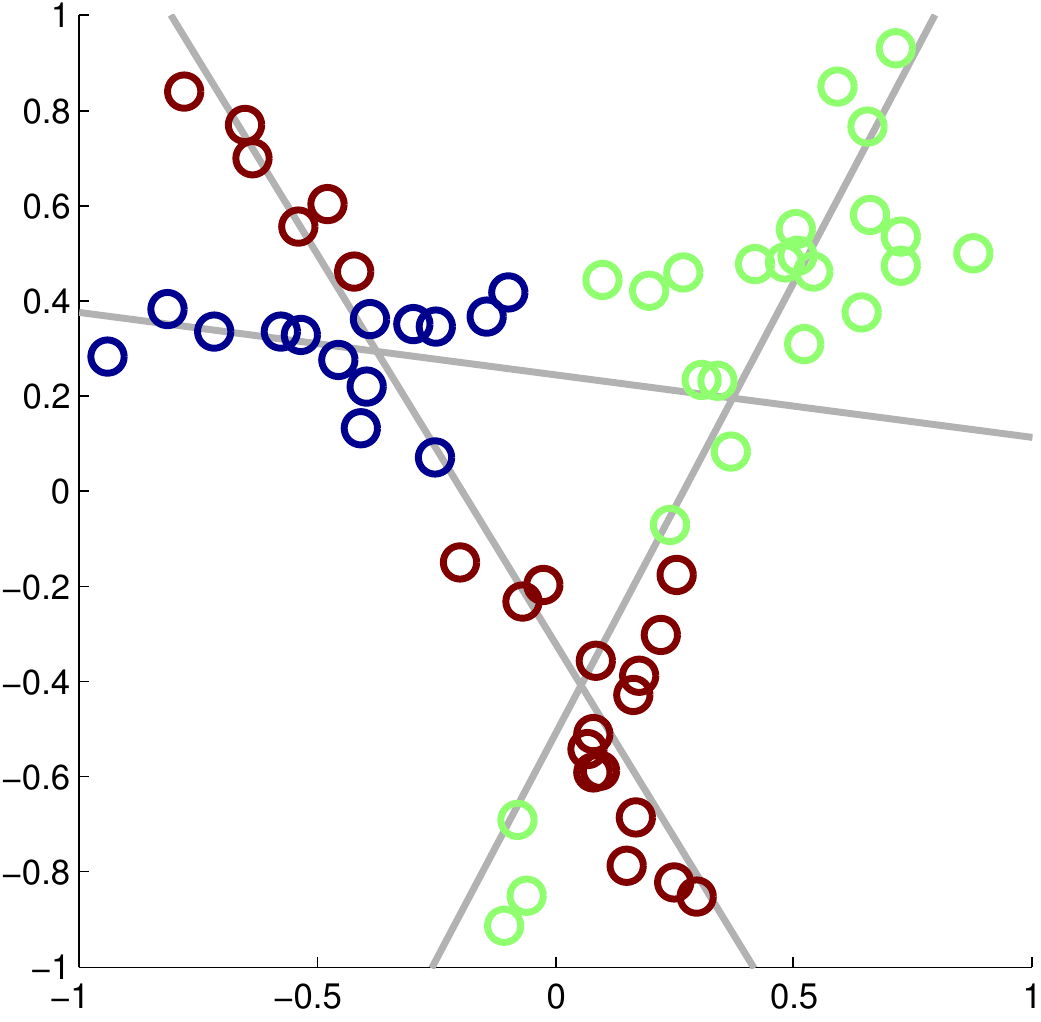}
&
\includegraphics[width=0.3\textwidth]{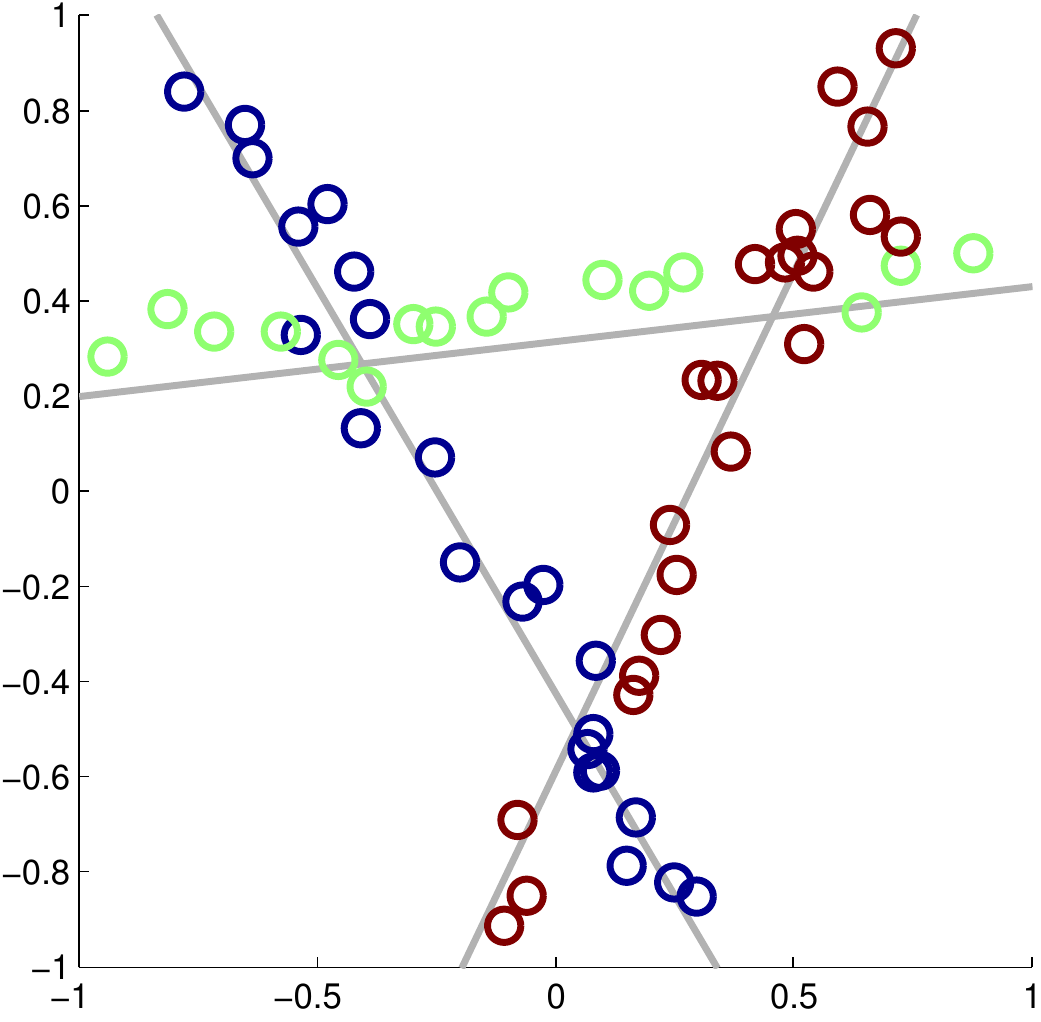}
&
\includegraphics[width=0.3\textwidth]{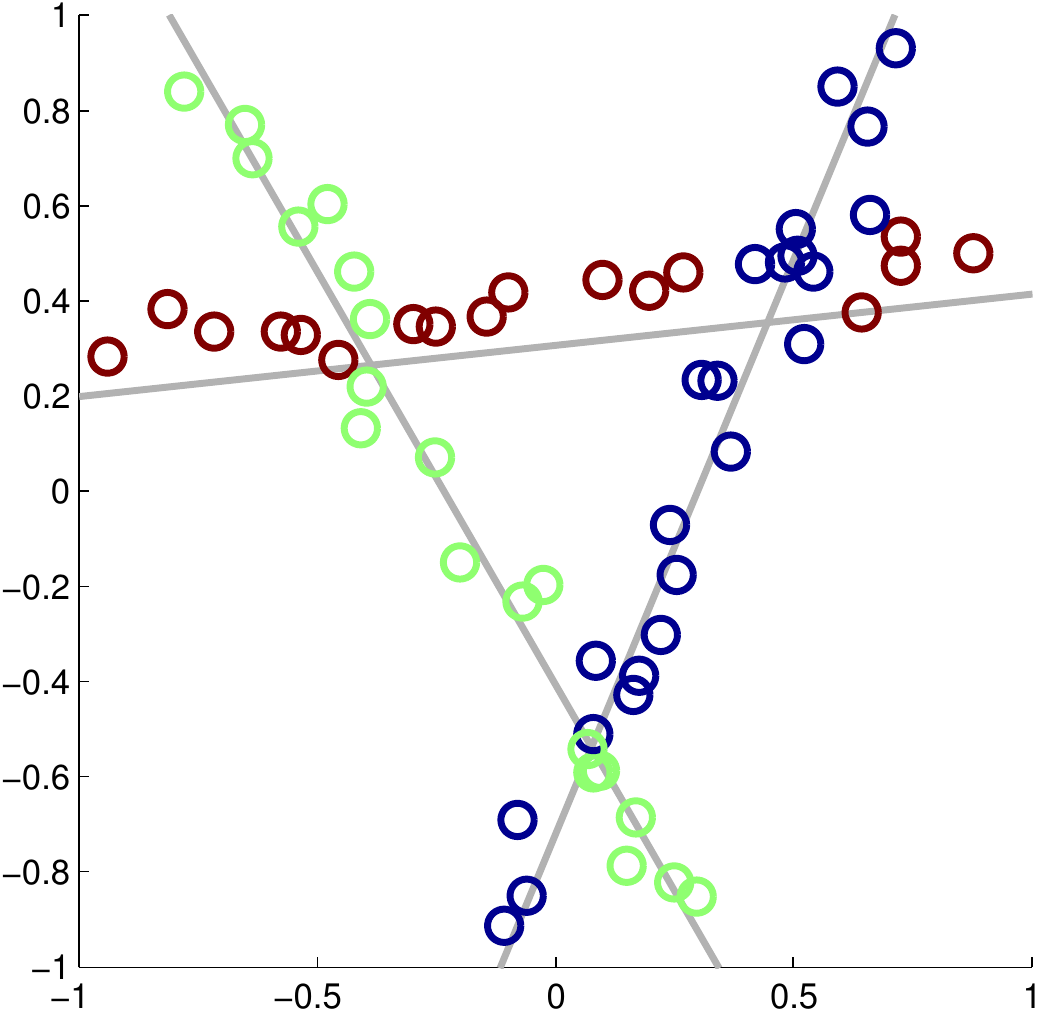}\\
\end{tabular}

\caption{\textbf{Affine Hyperplane Clustering} with  $d=2$, $k=3$, $n=60$ as a special case of \textbf{Hyperplane Clustering} with $d=3$. Top: Circles corresponding to data points and gray lines corresponding to local estimates for centers parametrized by $\lambda_i$ extracted from $(R3[1])$. 
Bottom: Gray lines corresponding to rounded solution of $(R3[1])$ and colored data points according to the extracted clustering. From left to right: Discretization of $S^2_{\ell_2}\cap H^2_+ \times [-0.3, 0.3]$ into $(2\times 8)$ , $(4\times 4)$ and $(8\times 2)$ line segments, where $S^2_{\ell_2}\cap H^2$ is approximated like in figure \ref{fig:hyperplane_polygon}.
}

\label{fig:hyperplane_affine}

\end{figure}

%

\section{Conclusion}

We introduced the concept of separating triangulations for affine subspace clustering problems. Based on this property, a symmetry-free reformulation was deduced for this problem, which allowed us to apply the framework of Lasserres method of moments to construct a hierarchy of convex SDP relaxations. 
We showed how the first step of this hierarchy can be simplified and gave a second hierarchy of relaxation with better computational properties. Based on this, we were able to show experimental results as a proof of concept. 

We hope that this paper  gives some insight into how to remove symmetry from SDPs without reducing them to the invariant space and losing information in this process. While higher steps in the hierarchy may not be tractable for big datasets yet, we hope that this approach may contribute to finding the global solutions for this problem class in the future.

\bibliographystyle{plain}
\bibliography{papersF,papersC}

\end{document}